\documentclass[reqno,12pt,a4paper]{amsart}
\usepackage[T1]{fontenc}
\usepackage[utf8]{inputenc}
\usepackage[english]{babel}
\usepackage{amsmath,amssymb,amsthm}
\usepackage{bm}
\usepackage[margin=25mm]{geometry}
\usepackage[hidelinks]{hyperref}
\newcommand{\cN}{\mathbb N}
\newcommand{\cZ}{\mathbb Z}
\newcommand{\PP}{\mathbb P}
\newcommand{\cA}{\mathcal A}
\newcommand{\cB}{\mathcal B}

\theoremstyle{plain}
\newtheorem{theorem}{Theorem}[section]
\newtheorem{lemma}{Lemma}[section]
\newtheorem{corollary}{Corollary}[section]
\theoremstyle{definition}

\theoremstyle{remark}
\newtheorem{remark}{Remark}
\numberwithin{equation}{section}

\newcommand{\cR}{\mathcal R}
\newcommand{\E}{\mathbb E}
\newcommand{\ind}{\mathbf 1}
\allowdisplaybreaks[2]

\title[Large gaps between Romanoff numbers]{Large gaps between Romanoff numbers}
\author{Artyom Radomskii}
\address{HSE University, Moscow, Russian Federation}
\keywords{Romanoff numbers, large gaps, additive number theory,
sieve methods, truncated divisor sums, covering congruences}
\subjclass[2020]{Primary 11P32; Secondary 11B05, 11N36}

\email{artyom.radomskii@mail.ru}
\date{}

\begin{document}
\begin{abstract}
Questions concerning representations of integers as the sum of a prime
and a power of two go back to the correspondence between Euler and
Goldbach in 1752 and to de Polignac's work of 1849. Romanoff proved
that the set of integers admitting such a representation has positive
lower density. Following the complementary direction
studied by Kalmynin and Konyagin, we consider long intervals containing
no Romanoff numbers. We use truncated products of divisor sums to
derive the bound $G_{\mathcal{R}}(X)\gg\log\log X$ for the longest block in
$[1,X]$ containing no integer of the form $p+2^n$, with $p$ prime and
$n\ge1$. This improves the bound
$G_{\mathcal{R}}(X)\gg\log\log X/\log\log\log X$ of Kalmynin and Konyagin.
The key step extends a translate lemma to sets of $O(\log X)$ integer
shifts of absolute value at most $X$: auxiliary primes dividing
differences of shifts are removed at negligible cost. After preliminary
sieving, this allows all remaining shifts to be covered simultaneously.
The argument is unconditional and also applies to $p+a^n$ for every
fixed integer $a\ge2$.
\end{abstract}
\maketitle

\section{Introduction}

Let $\PP$ denote the set of primes and $\cN=\{1,2,\ldots\}$. Put
\[
 \cR=\{p+2^n:p\in\PP,\ n\in\cN\}.
\]
The length of a block of consecutive integers means the number of
integers in the block. For $X\ge1$, define
\[
 \begin{aligned}
 G_{\cR}(X)=\max\bigl(&\{0\}\cup
 \{H\in\cN:\ \exists N\in\cZ_{\ge0},\ N+H\le X,\\
 &\hspace{33mm}\{N+1,\ldots,N+H\}\cap\cR=\varnothing\}\bigr).
 \end{aligned}
\]

The study of representations involving
primes and powers of two goes back to Euler and Goldbach; see Euler's
letter to Goldbach dated 16 December 1752~\cite{EulerGoldbach}.
De Polignac's paper of 1849~\cite[Th\'eor\`eme~2, IV]{Polignac} is
another early contribution to this subject. In 1934, Romanoff~\cite{Romanoff}
proved that $\cR$ has positive lower density. The theorem of Romanoff was generalized by many authors in several directions. Rieger~\cite{Rieger} extended Romanoff's theorem to algebraic number
fields. Shparlinski and Weingartner~\cite{ShparlinskiWeingartner}
proved an explicit analogue over finite fields, estimating the
proportion of polynomials representable as $h+g^k$, where $h$ is
irreducible and $g$ is a fixed polynomial of positive degree. Radomskii~\cite[Theorem~1.1]{Radomskii} established a general criterion
for many integers to have many representations $a_i+b_j$.
It combines counting and pair-correlation estimates for
$\cA=\{a_i\}$ with an averaged congruence condition for
$\cB=\{b_j\}$, explicitly allowing repeated terms in $\cB$.
Applications include prime summands and sums of two squares,
combined with polynomial values, restricted powers, and
elliptic-curve point counts.

The complementary problem concerns integers outside $\cR$.
Erd\H{o}s~\cite{Erdos} constructed, using covering congruences,
an arithmetic progression of odd integers containing no Romanoff
numbers. In particular, the upper asymptotic density of $\cR$ is
strictly less than $1/2$. Habsieger and Roblot~\cite[\S3]{HabsiegerRoblot}
subsequently obtained the explicit bound
\[
 \limsup_{X\to\infty}\frac{\#(\cR\cap[1,X])}{X}<0.490941
\]
by computations with residue classes.

Kalmynin and Konyagin~\cite[Theorem~1]{KK} proved that
\[
 G_{\cR}(X)\gg\frac{\log\log X}{\log\log\log X}.
\]

Using the weight construction in \cite[\S 4]{LG} we obtain the following improvement.

\begin{theorem}\label{thm:main}
There exist absolute constants $c>0$ and $X_0>0$ such that, for every
real $X\ge X_0$, the interval $[1,X]$ contains a block of at least
$\lfloor c\log\log X\rfloor$ consecutive integers not belonging to
$\cR$. In particular,
\begin{equation}\label{eq:main}
 G_{\cR}(X)\gg\log\log X.
\end{equation}
\end{theorem}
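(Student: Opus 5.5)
We want an interval $\{N+1,\dots,N+H\}$ with $H=\lfloor c\log\log X\rfloor$ and $N+H\le X$ such that $N+h-2^n$ is composite (or $\le 1$) for every $1\le h\le H$ and every $n\ge1$ with $2^n<N+h$. We follow the Kalmynin--Konyagin strategy: fix $N$ modulo a modulus $M$ so that, for each $h$, the finitely many shifts $h-2^n$ are handled by congruences, then choose $N$ inside that progression using a sieve.

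The plan has two layers, matching the abstract.

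\textbf{Covering step.} Since $2$ has multiplicative order $L$ modulo a suitable set of primes, $2^n$ modulo $\prod q$ depends only on $n \bmod L$, with $L$ a highly composite number of size about $\log X$ (or smaller). For each residue class of $n$ modulo $L$ and each $h\le H$, the quantity $N+h-2^n$ is a fixed linear form in $N$ modulo each such prime $q$. Thus we face roughly $HL$ shifts. We would assign to each pair $(h, n\bmod L)$ a prime $q\mid 2^L-1$ and impose $N\equiv 2^n-h \pmod q$, so that $q\mid N+h-2^n$. Since $2^L-1$ has many prime factors when $L$ is smooth (Erd\H{o}s-type covering), this kills a positive proportion of the pairs. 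The distinct primes used must not exceed what $2^L-1$ offers; by Kalmynin--Konyagin this works for a constant fraction. Alternatively, we use random choices and accept that most, but not all, pairs are covered.

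\textbf{Sieve step for the remaining shifts.} This is where the gain over $\log\log X/\log\log\log X$ comes from, using the translate lemma (extended to $O(\log X)$ shifts of size $\le X$) with the weights of \cite[\S4]{LG}. After the preliminary covering, we regard the remaining uncovered pairs as a set of $O(\log X)$ actual integer shifts $b=h-2^n$ with $|b|\le X$, since $n\le\log_2 X$. We then need $N$ in the progression $\bmod M$ such that every $N+b$ is composite. We apply the truncated divisor-sum weights, i.e.\ products $\prod_b\bigl(\sum_{d\mid N+b,\,d\le R}\lambda_d\bigr)$, to show that a random $N$ weighted this way has $N+b$ divisible by a small prime, or equivalently we sieve out primes via one extra prime per shift. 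The shifts can share prime factors of their differences $b-b'$, which spoils the independence of the sieve. Following the abstract, we remove all auxiliary primes dividing some difference $b-b'$. There are at most $(\log X)^2$ differences, each $\le 2X$, so at most $O((\log X)^3)$ such primes, and their removal costs only a negligible factor in the Euler product. After that, the local conditions at distinct remaining primes are independent, and we choose for each remaining shift a distinct prime $p_b$ with $N\equiv -b\pmod{p_b}$. By CRT, all of them are covered simultaneously, with total modulus $\le \exp(O(\log X\cdot\log\log X))$ unless we control sizes.

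\textbf{Size bookkeeping and main obstacle.} Finally, we need the total modulus $M\cdot\prod p_b$ to be at most $X^{1/2}$, so that $N\le X$ exists and $N+b>p_b$ guarantees compositeness. This forces the number of uncovered shifts times $\log$ of the primes used to be $\le \tfrac12\log X$, which is what pins $H\asymp\log\log X$: $L\asymp \log X/\log\log X$-type choices give $HL$ shifts with primes of size $\log X$. The main obstacle, which we expect to be the hardest step, is the counting in the translate lemma. We must show that the weights still produce a positive (main-term-dominated) count of $N\le X$ after discarding primes dividing differences of shifts that are as large as $X$, rather than bounded. We would first try to bound the discarded contribution by $\sum_{p\mid\prod(b-b')}1/p\ll\log\log\log X$, which is absorbed by taking $c$ small.
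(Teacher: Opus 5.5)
There is a genuine gap. The step that actually produces $\log\log X$ is missing, and the mechanism you do spell out would fail. Your second layer (discard primes dividing differences of shifts, then a translate lemma) points in the right direction. But after any preliminary sieving there remain $k\asymp\log X$ shifts (in the paper, up to $\eta x$ with $x=\tfrac13\log X$). Giving each shift a dedicated prime $p_b$ and gluing by the Chinese remainder theorem needs a modulus at least the product of $k$ distinct primes, i.e.\ $e^{(1+o(1))k\log k}=X^{\asymp\log\log X}$, so no admissible $N\le X$ exists. Your own bookkeeping confirms this: $HL$ shifts with $L\asymp\log X/\log\log X$, primes of size $\log X$, and the constraint $HL\log\log X\le\tfrac12\log X$ force $H=O(1)$, not $H\asymp\log\log X$.

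What is needed is Lemma~\ref{lem:translate}: for any $k\le\eta x$ distinct shifts with $|s_i|\le e^{3x}$, a single $t\le e^x$ makes every $b+Qt+s_i$ divisible by a prime in $(x^6,\exp(x/(\log x)^5)]$. The price is a search range of length $e^x$ (total size $Qe^x=X^{2/3+o(1)}$), not a product of dedicated primes. Its proof is a weighted count, not a CRT construction. Take $w(t)=R(t)^2$ with $R(t)=\sum_{\bm r\in\mathcal D_k}\prod_i a(r_i)\psi_{i,r_i}(t)$, where the truncation is on the product $r_1\cdots r_k\le D=e^{x/4}$. Truncating each factor separately at $d\le R$, as you wrote, gives level $R^k$ and forces $R=O(1)$. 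The coefficients are $a(1)=1$, $a(d)=-1/(B\log d)$, normalized so that $\sum_{d\mid P_*}a(d)/\varphi(d)=0$. Then $S_1\sim A^k$. Whenever $L_i(t)$ is coprime to $P_*$, the weight collapses to $R_i(t)^2$, whose coefficients carry the factor $\varepsilon(v)\ll(v/D)^\beta+\log D/z$. This gives $S_2/S_1=o(1)$, hence some $t$ with no uncovered form.

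Two quantitative claims also need correcting.

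First, the preliminary step must cut the $Hm\asymp H\log X$ shifts down to at most $\eta x$, i.e.\ reduce them by a factor $\asymp 1/\log\log X$. Killing ``a positive proportion'' via primes dividing $2^L-1$ is not enough. No covering congruences or orders of $2$ are needed: averaging over $b_0\bmod Q$, $Q=\prod_{p\le x}p$, leaves at most $Hm\varphi(Q)/Q\asymp Hx/\log x$ shifts. This is exactly why $H=\lfloor c_0\log x\rfloor$ is the attainable length.

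Second, a loss of $\sum 1/p\ll\log\log\log X$ from exceptional primes cannot be absorbed by shrinking the constant $c$, since it tends to infinity. In the argument this loss enters through $0\le B_0-B\le (U_0/\log z)\sum_{p\in\mathcal E}1/p$ with $U_0/\log z\asymp x/(\log x)^7$, so it must be tiny. The paper arranges this by sieving only with primes $p>z=x^6$. Since $\log\Delta=O(x^3)$, one gets $\sum_{p\in\mathcal E}1/p\le\log\Delta/(z\log z)\ll 1/(x^3\log x)$.
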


\begin{samepage}
\begin{corollary}\label{cor:base}
Let $a\ge2$ be a fixed integer and let
$\cR_a=\{p+a^n:p\in\PP,\ n\in\cN\}$. Then
\[
 G_{\cR_a}(X)\gg_a\log\log X
\]
for all sufficiently large $X$, where $G_{\cR_a}$ is defined
analogously to $G_{\cR}$.
\end{corollary}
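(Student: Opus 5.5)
The plan is to repeat the proof of Theorem~\ref{thm:main} with $2$ replaced by $a$ everywhere, and to check that each arithmetic input depends on the base only through constants. Fix $X$ large and set $H=\lfloor c_a\log\log X\rfloor$, with $c_a>0$ small and depending on $a$. We look for $N$ with $N+H\le X$ such that no $N+h$, $1\le h\le H$, can be written as $p+a^n$ with $p\in\PP$, $1\le n\le n_0:=\lfloor\log X/\log a\rfloor$. This is the same as asking that every number $N+h-a^n$ in $[2,X]$ be composite. The relevant shifts are $s=h-a^n$. There are $O_a(H\log X)$ of them, and all have absolute value at most $X$. This is precisely the range allowed by the translate lemma extended to $O(\log X)$ shifts; the extra factor $H$ is harmless, since $H\log X$ is still $\le(\log X)^{1+o(1)}$, or one can apply the lemma once for each $h$.

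The construction has two stages, as in the proof for base $2$.

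\textbf{Preliminary sieving by covering congruences.} Use the prime divisors of $a^m-1$ for a suitable $m$. Put $\ell_q=\operatorname{ord}_q(a)$ for primes $q\mid a^m-1$ with $q\nmid a$. Choosing $N$ modulo such $q$, we make $q\mid N+h-a^n$ for all $n$ in a prescribed residue class modulo $\ell_q$. Zsigmondy's theorem gives, for each $d$, a primitive prime divisor of $a^d-1$ (with finitely many exceptions depending on $a$). So for every $h\le H$ we can spend distinct primes to cover the exponent classes. The CRT modulus this costs is at most $\exp(O_a(H\log H))$, or $\exp(O_a(H))$ with the truncation of the paper. This is $\le X^{1/10}$ provided $c_a$ is small. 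Alternatively, and more economically, one can follow the paper and sieve only with small primes $\le y$ chosen at random. The shifts that survive are then covered together in the second stage.

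\textbf{Weight stage.} For the surviving shifts, apply the weight construction of \cite[\S4]{LG} in the extended translate-lemma form. Auxiliary primes that divide differences of two shifts $a^{n}-a^{n'}$ or $h-h'+a^{n'}-a^{n}$ are discarded, and the lemma says this costs negligibly. The number of such differences is polynomial in $\log X$, and each has $O(\log X)$ prime factors. So the discarded set has size $(\log X)^{O(1)}$ in every case, independent of $a$ apart from the constants. The conclusion is the existence of $N\le X$ (the total modulus stays $\le X^{1/2}$) for which every $N+s$ has a prime factor smaller than itself. One small point is that $N+s$ must not be prime because it equals its covering prime. This is handled by taking $N$ larger than all moduli used, so that $N+s$ exceeds each prime assigned to it whenever $N+s\ge 2$.

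\textbf{Main obstacle.} The delicate step is the base-specific preliminary sieving. For base $2$ one uses the explicit multiplicative orders of $2$ modulo small primes. For general $a$ we need enough primes $q$ with small $\operatorname{ord}_q(a)$, and primes dividing $a$ must be excluded, because for them the residue of $a^n$ is $0$ and behaves differently. I would handle both points through Zsigmondy's theorem and the bound $\sum_{q:\,\operatorname{ord}_q(a)\mid m}\log q\le m\log a$. These give the same density estimates as in base $2$, with constants multiplied by $\log a$. Those losses enter only through $c_a$, and this yields $G_{\cR_a}(X)\gg_a\log\log X$.
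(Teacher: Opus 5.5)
Your argument fails at the count of shifts. Lemma~\ref{lem:translate} handles at most $k\le\eta x$ shifts, where $x=\frac13\log X$ and $\eta$ is a small absolute constant. This is not a technicality: the tail bound in its proof needs $C_1k\le\frac12\log D=x/8$. You have $Hm\asymp H\log X/\log a$ shifts $j-a^n$, which is too many by a factor $\asymp\log\log X$. A bound $(\log X)^{1+o(1)}$ is not $O(\log X)$, so the factor $H$ is not harmless. Applying the lemma once for each $h$ does not help either, because it yields a different $t$ for each $h$ rather than one $N$.

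The missing step is the preliminary sieve the paper uses. Translation by a fixed $s$ permutes the classes modulo $Q=\prod_{p\le x}p$. Averaging over $b_0\bmod Q$ therefore gives a class for which at most $Hm\varphi(Q)/Q=\bigl(3e^{-\gamma_E}c_a/\log a+o(1)\bigr)x$ shifts satisfy $(b_0+s,Q)=1$, by Mertens. Choosing $c_a$ small makes this $\le\eta x$, and the lemma is then applied to these survivors with $b\equiv b_0\pmod Q$. The factor $\varphi(Q)/Q\asymp1/\log x$ is exactly what permits $H\asymp\log x\asymp\log\log X$. You mention random sieving only as an aside, and with the wrong range: a modulus built from all primes up to $y$ has size $e^{(1+o(1))y}$, far larger than $X$. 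You also omit this computation, which is the heart of the proof.

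Your covering-congruence stage is both unnecessary and unsupported. Nothing in the proof depends on $\operatorname{ord}_q(a)$, on Zsigmondy's theorem, or on primes dividing $a$. The averaging treats each $j-a^n$ as an arbitrary fixed integer, and the base enters only through $m\asymp\log X/\log a$. So your ``main obstacle'' is misidentified. The claimed cost $\exp(O_a(H\log H))$ for covering all exponents for every $h$ comes with no construction, and it is implausibly strong. It would cover every shift outright, make the weight stage superfluous, and give blocks of length $\gg_a\log X/\log\log X$.

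Finally, taking $N$ larger than all moduli does not make the covering primes proper divisors. The power $a^n$ can lie just below $N+h$, so $N+h-a^n$ can be a small positive integer equal to its covering prime. The paper avoids this by placement. With $U=a^m\in(X/a^2,X/a]$, it takes $\frac{a+1}2U<N<aU-H$. Then $N+j-a^n>\frac{a-1}2U>y$ for $n\le m$, and $N+j-a^n<0$ for $n>m$.
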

\end{samepage}

We follow the construction from \cite{LG}, modifying some arguments in it. The product cutoff in this construction is analogous to the simplex restriction used in the Maynard--Tao
sieve; see Maynard~\cite[\S~7]{Maynard}. Proposition~1.2 of~\cite{LG} is stated for shifts of small diameter.
Our problem involves large negative shifts $j-2^n$, so a separate
adaptation of its proof is required. This is given in detail in
Section~\ref{sec:lemma}. 

\section{Notation}\label{sec:notation}

We write $\cN$ for the positive integers, $\cZ$ for the integers,
and $\PP$ for the primes. In particular, $\cN=\{1,2,\ldots\}$ and
$\cZ_{\ge0}=\{0,1,2,\ldots\}$.
The letter $p$ always denotes a prime in sums and products indexed by
$p$. All logarithms are natural, $\exp v=e^v$, and $\lfloor v\rfloor$
is the greatest integer not exceeding the real number $v$.
For integers $r,s$, the notation $(r,s)$ denotes their greatest common
divisor; $r\mid s$ and $r\nmid s$ mean, respectively, that $r$ divides
$s$ and that it does not. Congruences and residue classes modulo an
integer $q\ge1$ are written $r\equiv s\pmod q$ and $r\bmod q$.
For a finite set $S$, both $|S|$ and $\#S$ denote its cardinality.
The symbol $\varnothing$ denotes the empty set.
The binomial coefficient $\binom{k}{2}=k(k-1)/2$ counts unordered
pairs of distinct indices from $\{1,\ldots,k\}$.
Empty sums and products have values $0$ and $1$, respectively.

The indicator $\ind_{\mathcal E}$ is $1$ when the condition
$\mathcal E$ holds and $0$ otherwise. We use the arithmetic functions
\[
 \begin{gathered}
 \varphi(n)=\#\{1\le r\le n:(r,n)=1\},\qquad
 \omega(n)=\#\{p\in\PP:p\mid n\},\\
 \sigma_1(n)=\sum_{d\mid n}d\quad(n\in\cN).
 \end{gathered}
\]
Unless stated otherwise, divisors in sums and products are positive. We recall that $\varphi$ is called Euler's totient function.
Euler's constant is denoted by
\[
 \gamma_E=\lim_{n\to\infty}\left(\sum_{r=1}^n\frac1r-\log n\right).
\]

For $g>0$, the relations $f\ll g$ and $f=O(g)$ mean that
$|f|\le Cg$ for a suitable positive constant $C$ in the range under
consideration. For nonnegative quantities, $f\gg g$ means $g\ll f$,
and $f\asymp g$ means both $f\ll g$ and $g\ll f$.
A subscript, as in $\ll_a$ or $\gg_a$, permits dependence on the
fixed parameter $a$; otherwise the implied constants are absolute.
The relation $f=o(g)$ means $f/g\to0$, and $o(1)$ denotes a quantity
tending to zero. Limits are taken as $x\to\infty$ or $X\to\infty$,
as appropriate; the base $a$ is fixed. The constants $C_1\ge1$ and $\eta>0$ are absolute and,
once chosen, remain fixed throughout the proof.

\section{A lemma on large shifts}\label{sec:lemma}

\begin{lemma}\label{lem:translate}
There exists an absolute constant $\eta>0$ with the following property.
Let $x$ be sufficiently large, and suppose that
\[
 Q=\prod_{p\le x}p,
 \qquad s_1,\ldots,s_k\in\cZ,
 \qquad |s_i|\le e^{3x},
 \qquad k\le\eta x,
\]
where the shifts $s_i$ are pairwise distinct. Then, for every
$b\in\cZ$, there exists an integer $t$ with $1\le t\le e^x$ such
that each number $b+Qt+s_i$ is divisible by a prime in the interval
\[
 \left(x^6,\ \exp\!\left(\frac{x}{(\log x)^5}\right)\right].
\]
\end{lemma}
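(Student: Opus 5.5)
We prove the lemma by a probabilistic covering argument over a random choice of one residue class per prime, in the style of the weight construction of \cite[\S4]{LG}. Put $y=x^6$ and $z=\exp(x/(\log x)^5)$. For each prime $p\in(y,z]$ we choose a residue class $a_p\bmod p$ and consider the set of $t\in[1,e^x]$ with $t\equiv a_p\pmod p$ for many $p$. Since $(Q,p)=1$ for $p>x$, the condition $p\mid b+Qt+s_i$ pins down a single class $t\equiv -Q^{-1}(b+s_i)\pmod p$. The task is therefore to find one $t\le e^x$ which, for every $i\le k$, lies in the class attached to $s_i$ for at least one prime $p\in(y,z]$.

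\textbf{Step 1 (removing bad primes).} Let $\mathcal B$ be the set of primes $p\in(y,z]$ dividing some difference $s_i-s_j$ with $i\ne j$. Each difference is nonzero with absolute value at most $2e^{3x}$, so it has at most $O(x/\log y)=O(x/\log x)$ prime factors exceeding $y$. Hence
\[
|\mathcal B|\ll k^2x/\log x\ll x^3,
\]
and
\[
\sum_{p\in\mathcal B}\frac1p\le \frac{|\mathcal B|}{y}\ll x^{-3}.
\]
This is negligible. For $p\notin\mathcal B$ the $k$ classes $-Q^{-1}(b+s_i)\bmod p$ are pairwise distinct. This is the ``auxiliary primes dividing differences are removed at negligible cost'' step, and it replaces the small-diameter hypothesis of \cite[Prop.~1.2]{LG}.

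\textbf{Step 2 (the weight).} On $t\in[1,e^x]$ we use the truncated product of divisor sums from \cite[\S4]{LG},
\[
w(t)=\prod_{i=1}^k\Bigl(\sum_{d\mid b+Qt+s_i,\ d\le R_i}\lambda_d\Bigr),
\]
with the product truncation on $d_1\cdots d_k\le e^{x/2}$ analogous to Maynard's simplex restriction. The goal is to show, for each $i$,
\[
\sum_t w(t)\,\#\{p\in(y,z]\setminus\mathcal B:\ p\mid b+Qt+s_i\}\ \ge\ \Bigl(\sum_{y<p\le z}\frac1p-o(1)\Bigr)\sum_t w(t).
\]
The quantity $\sum_{y<p\le z}1/p=\log\bigl(\log z/\log y\bigr)\approx\log x$ is large, while $k\le\eta x$.

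Because the weight is truncated, the sums over $t$ reduce to counting $t$ in residue classes modulo $\prod d_i\cdot p\le e^{x/2}z<e^x$. This gives main terms with error $O(1)$ per class. Distinctness of the classes modulo primes outside $\mathcal B$ makes the local densities multiplicative. Primes $p\le x$ cause no trouble, since $Q$ kills them: $b+Qt+s_i\equiv b+s_i$ there.

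\textbf{Step 3 (covering all $i$ at once).} We replace the $k$ separate first-moment statements by a single bound showing that the proportion of weight on $t$ for which some $i$ is uncovered is less than $1$. A first-moment bound alone gives $k\cdot(\text{miss probability})$. We therefore choose the $\lambda$'s so that the miss probability per $i$ is about $\exp(-c\log x\cdot\ldots)$; more precisely, we aim for
\[
\sum_t w(t)\ind_{i\text{ uncovered}}\le \frac{1}{2k}\sum_t w(t).
\]
To get this, we would run a Rankin/Janson-type bound: the events $p\mid b+Qt+s_i$ for distinct $p$ are approximately independent under $w$, so the miss probability is about
\[
\prod_{y<p\le z}\Bigl(1-\frac1p\Bigr)\approx\frac{\log y}{\log z}=\frac{6(\log x)^6}{x}.
\]
This is $<1/(2k)$ once $\eta$ is small. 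This exponent bookkeeping — showing that the weight does not distort the independence, and controlling the error terms uniformly over $|s_i|\le e^{3x}$ — is the main obstacle. I would first try to expand $\prod_p(1-\ind_{p\mid\cdot})$ via a Bonferroni truncation at depth about $\log x$, so that the moduli stay below $e^{x/2}$.

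Once some $t\le e^x$ has positive weight with every $i$ covered, that $t$ satisfies the lemma.
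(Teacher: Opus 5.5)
There is a genuine gap, and it lies in Step 3, which carries the entire content of the lemma. Your own computation refutes the target there. The independent miss probability $\prod_{x^6<p\le\exp(x/(\log x)^5)}(1-1/p)\approx 6(\log x)^6/x$ is \emph{not} below $1/(2k)$ when $k$ is as large as $\eta x$: that would require $12\eta(\log x)^6<1$, which fails for every fixed $\eta>0$ once $x$ is large. Consider any weight under which the events $p\mid b+Qt+s_i$ are approximately independent; your Step 2 first-moment statement says no more than this, and the uniform weight already satisfies it. Under such a weight the expected number of uncovered forms is about $6\eta(\log x)^6\to\infty$, so the union bound cannot close. Bonferroni or Janson refinements only evaluate this independent-model quantity more accurately, so they cannot rescue the argument. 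Nor does enlarging the prime range help: the primes up to $x$ are already used by $Q$, and $k\prod_{x<p\le e^x}(1-1/p)\asymp\eta\log x$ is still unbounded. Approximate independence is exactly what the weight must destroy. It has to concentrate its mass on those $t$ for which every form is covered.

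The missing idea is a weight that nearly vanishes as soon as a single form is uncovered. In the paper's notation, $z=x^6$ and $y=\exp(x/(\log x)^5)$ (the reverse of yours), and $D=e^{x/4}$. The paper takes $w=R^2$ with $R(t)=\sum_{\mathbf r}\prod_{i}a(r_i)\psi_{i,r_i}(t)$, summed over pairwise coprime tuples of divisors of $P_*$ with $r_1\cdots r_k\le D$. Here $\psi_{i,p}(t)=(1-p\ind_{p\mid L_i(t)})/(p-1)$ has mean $0$ and variance $1/(p-1)$, and $a(1)=1$, $a(d)=-1/(B\log d)$ is normalized so that $\sum_{d\mid P_*}a(d)/\varphi(d)=0$. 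If $(L_i(t),P_*)=1$, then $\psi_{i,d}(t)=1/\varphi(d)$, so the $i$th coordinate sum collapses to $\varepsilon(v)=\sum_{d\mid P_*,\,d\le D/v,\,(d,v)=1}a(d)/\varphi(d)$. By the zero-sum identity this is only a truncation tail, $\varepsilon(v)\ll(v/D)^{\beta}+\log D/z$. This gives uncovered weight $\ll A^k\bigl(e^{-u}+(\log D)^2/z^2\bigr)$ per form, with $u=(\log x)^5/4$. The total weight is $(1+o(1))A^k$, obtained from a near-orthogonality estimate; that is where distinct roots modulo $p\nmid\Delta$ are used. So the ratio stays $o(1)$ even after multiplying by $k\le\eta x$. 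The constant $\eta$ enters only to guarantee $e^{C_1k\beta}\le D^{\beta/2}$, so that the growth $A_\beta^k\le A^ke^{C_1k\beta}$ does not overwhelm the factor $D^{-\beta}$ in the truncation tails.

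Two smaller points. Your Step 2 weight, an unsquared product of divisor sums, need not be nonnegative. After squaring, a truncation at $e^{x/2}$ is too large, because the error in passing from the complete average to $1\le t\le e^x$ is $(\sum|\lambda_{\mathbf d}|)^2/T$. This is why the paper takes $D=e^{x/4}$, giving $D^3/T\ll e^{-x/4}$.
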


\begin{remark}
The lemma asserts divisibility only. Positivity of the values and the
fact that the resulting prime divisors are proper are checked when the
lemma is applied. 
\end{remark}

\begin{proof}
If $k=0$, there are no divisibility conditions, and $t=1$ proves
the assertion. Henceforth assume $k\ge1$. We may assume
that $\eta\le1$. Fix the parameters
\begin{equation}\label{eq:parameters}
 \begin{gathered}
 z=x^6,\qquad y=\exp\!\left(\frac{x}{(\log x)^5}\right),
 \qquad D=e^{x/4},\qquad T=\lfloor e^x\rfloor,\\
 \beta=\frac1{\log y},\qquad
 u=\frac{\log D}{\log y}=\frac14(\log x)^5.
 \end{gathered}
\end{equation}

\medskip\noindent\textit{Removing primes at which roots coincide.}
Put
\[
 \Delta=\prod_{1\le i<j\le k}|s_i-s_j|,
\]
where an empty product is understood to be $1$. Then
\[
 \log\Delta\le\binom{k}{2}(3x+\log2)=O(x^3).
\]
Define
\[
 P_0=\prod_{z<p\le y}p,\qquad
 P_* =\prod_{\substack{z<p\le y\\p\nmid\Delta}}p.
\]We have
\[
\#\{p: p>z, p|\Delta\}\le \sum_{\substack{p>z\\ p|\Delta}}\frac{\log p}{\log p}
\le \frac{1}{\log z}\sum_{p|\Delta}\log p\le \frac{\log \Delta}{\log z}.
\]Therefore
\begin{equation}\label{eq:exceptional}
 \sum_{\substack{p>z\\p\mid\Delta}}\frac1p
 \le\frac{\log\Delta}{z\log z}
 \ll\frac1{x^3\log x}.
\end{equation}
For $L_i(t)=b+Qt+s_i$ and every $p\mid P_*$, we have $(p,Q)=1$.
Thus $L_i(t)\equiv0\pmod p$ specifies exactly one residue class of $t$,
and these classes are distinct for different $i$.

\medskip\noindent\textit{Preserving the coefficient estimates.}
Introduce
\[
 B_0=\sum_{\substack{d\mid P_0\\d>1}}
       \frac1{\varphi(d)\log d},\qquad
 B=\sum_{\substack{d\mid P_*\\d>1}}
       \frac1{\varphi(d)\log d},\qquad
 U_0=\sum_{d\mid P_0}\frac1{\varphi(d)}.
\]
Let us show that
\begin{equation}\label{eq:full-estimates}
 B_0\asymp1,\qquad
 \sum_{\substack{d\mid P_0\\d>1}}
       \frac1{\varphi(d)(\log d)^2}\ll\frac1{(\log x)^2},
 \qquad U_0\asymp\frac{\log y}{\log z}.
\end{equation}

We shall use the following standard forms of Mertens' estimates:
\[
 \prod_{q\le t}\left(1-\frac1q\right)^{-1}
 \asymp\log t,\qquad
 \sum_{q\le t}\frac{\log q}{q}\ll\log t,
\]
and
\[
 \sum_{q\le t}\frac1q
 =\log\log t+M+O\!\left(\frac1{\log t}\right),
\]
where $M$ is an absolute constant and the letters $p$ and $q$
denote primes in these Mertens estimates.

\medskip\noindent
\textit{The estimate for $U_0$.}
Since $P_0$ is squarefree, multiplicativity gives
\[
 U_0
 =\sum_{d\mid P_0}\frac1{\varphi(d)}
 =\prod_{z<p\le y}\left(1+\frac1{p-1}\right)
 =\prod_{z<p\le y}\left(1-\frac1p\right)^{-1}.
\]
Consequently, Mertens' product estimate yields
\[
 U_0
 =\frac{\displaystyle
          \prod_{p\le y}(1-1/p)^{-1}}
        {\displaystyle
          \prod_{p\le z}(1-1/p)^{-1}}
 \asymp\frac{\log y}{\log z}.
\]

\medskip\noindent
\textit{Grouping divisors by their largest prime factor.}
For each prime $p\in(z,y]$, put
\[
 P_{<p}=\prod_{z<q<p}q,\qquad
 V_p=\sum_{e\mid P_{<p}}\frac1{\varphi(e)}.
\]
Again by multiplicativity and Mertens' product estimate,
\[
 V_p=\prod_{z<q<p}\left(1-\frac1q\right)^{-1}
 \asymp\frac{\log p}{\log z},
\]
uniformly for $z<p\le y$. Indeed,
\[
 V_p
 =\left(1-\frac1p\right)
   \frac{\displaystyle
          \prod_{q\le p}(1-1/q)^{-1}}
        {\displaystyle
          \prod_{q\le z}(1-1/q)^{-1}}.
\]

Every divisor $d>1$ of $P_0$ has a unique representation
$d=pe$, where $p$ is its largest prime factor and
$e\mid P_{<p}$. Since $(p,e)=1$, we have
$\varphi(pe)=(p-1)\varphi(e)$. Thus
\[
 B_0
 =\sum_{z<p\le y}\frac1{p-1}
   \sum_{e\mid P_{<p}}
       \frac1{\varphi(e)\log(pe)}.
\]

\medskip\noindent
\textit{Upper and lower bounds for the inner sum.}
Write
\[
 T_p=\sum_{e\mid P_{<p}}
          \frac1{\varphi(e)\log(pe)}.
\]
The inequality $\log(pe)\ge\log p$ immediately gives
\[
 T_p\le\frac{V_p}{\log p}.
\]

For the lower bound, we first evaluate a logarithmic moment.
Fix a prime $q\mid P_{<p}$. Since $P_{<p}$ is squarefree,
every divisor $e\mid P_{<p}$ divisible by $q$ can be written
uniquely as
\[
 e=qf,\qquad f\mid P_{<p}/q,\qquad (q,f)=1.
\]
By the multiplicativity of $\varphi$, we have
$\varphi(qf)=(q-1)\varphi(f)$. Therefore
\[
 \sum_{\substack{e\mid P_{<p}\\q\mid e}}
       \frac1{\varphi(e)}
 =\frac1{q-1}
   \sum_{f\mid P_{<p}/q}\frac1{\varphi(f)}.
\]
We see that
\[
 \sum_{f\mid P_{<p}/q}\frac1{\varphi(f)}
 =\prod_{\substack{z<r<p\\r\in\PP\\r\ne q}}
       \left(1+\frac1{r-1}\right).
\]
On the other hand, separating the Euler factor corresponding
to $q$ in the product for $V_p$, we obtain
\[
 \begin{aligned}
 V_p
 &=\prod_{\substack{z<r<p\\r\in\PP}}\left(1+\frac1{r-1}\right)\\
 &=\left(1+\frac1{q-1}\right)
   \prod_{\substack{z<r<p\\r\in\PP\\r\ne q}}
       \left(1+\frac1{r-1}\right)\\
 &=\frac{q}{q-1}
   \prod_{\substack{z<r<p\\r\in\PP\\r\ne q}}
       \left(1+\frac1{r-1}\right).
 \end{aligned}
\]
Consequently,
\[
 \begin{aligned}
 \sum_{\substack{e\mid P_{<p}\\q\mid e}}
       \frac1{\varphi(e)}
 &=\frac1{q-1}
   \prod_{\substack{z<r<p\\r\in\PP\\r\ne q}}
       \left(1+\frac1{r-1}\right)\\
 &=\frac1{q-1}\cdot\frac{q-1}{q}\,V_p
 =\frac{V_p}{q}.
 \end{aligned}
\]
Since all divisors of $P_{<p}$ are squarefree, it follows that
\[
 \begin{aligned}
 \sum_{e\mid P_{<p}}\frac{\log e}{\varphi(e)}
 &=
 \sum_{z<q<p}\log q
   \sum_{\substack{e\mid P_{<p}\\q\mid e}}
       \frac1{\varphi(e)}\\
 &=V_p\sum_{z<q<p}\frac{\log q}{q}
 \ll V_p\log p.
 \end{aligned}
\]
Therefore
\[
 \sum_{e\mid P_{<p}}\frac{\log(pe)}{\varphi(e)}
 =V_p\left(\log p+
          \sum_{z<q<p}\frac{\log q}{q}\right)
 \ll V_p\log p.
\]

Applying the Cauchy--Schwarz inequality, we obtain
\[
 \begin{aligned}
 V_p^2
 &=
 \left(
   \sum_{e\mid P_{<p}}
   \sqrt{\frac1{\varphi(e)\log(pe)}}
   \sqrt{\frac{\log(pe)}{\varphi(e)}}
 \right)^2\\
 &\le
 T_p\sum_{e\mid P_{<p}}
          \frac{\log(pe)}{\varphi(e)}
 \ll T_pV_p\log p.
 \end{aligned}
\]
Hence
\[
 T_p\gg\frac{V_p}{\log p}.
\]
Combining the two bounds, we conclude that
\[
 T_p\asymp\frac{V_p}{\log p}
 \asymp\frac1{\log z},
\]
uniformly for $z<p\le y$.

\medskip\noindent
\textit{The estimate for $B_0$.}
The preceding calculation gives
\[
 B_0
 \asymp\frac1{\log z}
       \sum_{z<p\le y}\frac1{p-1}.
\]
Since
\[
 \sum_{z<p\le y}
 \left(\frac1{p-1}-\frac1p\right)
 \ll\sum_{n>z}\frac1{n^2}
 \ll\frac1z,
\]
Mertens' estimate implies
\[
 \sum_{z<p\le y}\frac1{p-1}
 =\log\!\left(\frac{\log y}{\log z}\right)
  +O\!\left(\frac1{\log z}\right).
\]
For our choice of parameters,
\[
 \frac{\log y}{\log z}
 =\frac{x}{6(\log x)^6},
\]
and consequently
\[
 \frac1{\log z}
 \log\!\left(\frac{\log y}{\log z}\right)
 =
 \frac{\log x-6\log\log x-\log6}{6\log x}
 \longrightarrow\frac16.
\]
It follows that $B_0\asymp1$.

\medskip\noindent
\textit{The sum with $(\log d)^2$ in the denominator.}
Grouping divisors by their largest prime factor once more,
we obtain
\[
 \begin{aligned}
 \sum_{\substack{d\mid P_0\\d>1}}
       \frac1{\varphi(d)(\log d)^2}
 &=
 \sum_{z<p\le y}\frac1{p-1}
   \sum_{e\mid P_{<p}}
       \frac1{\varphi(e)(\log(pe))^2}\\
 &\le
 \sum_{z<p\le y}
       \frac{V_p}{(p-1)(\log p)^2}\\
 &\ll
 \frac1{\log z}
 \sum_{z<p\le y}\frac1{p\log p}.
 \end{aligned}
\]

To estimate the last prime sum, define
\[
 H_z(t)=\sum_{z<p\le t}\frac{\log p}{p}
 \qquad(t\ge z).
\]
Mertens' estimate gives $H_z(t)\ll\log t$.
Partial summation therefore yields
\[
 \begin{aligned}
 \sum_{p>z}\frac1{p\log p}
 &=
 \lim_{R\to\infty}
 \left(
   \frac{H_z(R)}{(\log R)^2}
   +2\int_z^R
       \frac{H_z(t)}{t(\log t)^3}\,dt
 \right)\\
 &\ll
 \int_z^\infty\frac{dt}{t(\log t)^2}
 =\frac1{\log z}.
 \end{aligned}
\]
Thus
\[
 \sum_{\substack{d\mid P_0\\d>1}}
       \frac1{\varphi(d)(\log d)^2}
 \ll\frac1{(\log z)^2}
 \ll\frac1{(\log x)^2},
\]
because $\log z=6\log x$. This proves all three estimates
in~\eqref{eq:full-estimates}.

For every prime $p\mid P_0$, we have
\begin{equation}\label{eq:U0/p}
 \sum_{\substack{d\mid P_0\\p\mid d}}\frac1{\varphi(d)}
 =\frac{U_0}{p}.
\end{equation}
Indeed, since $P_0$ is squarefree, every divisor $d\mid P_0$
divisible by $p$ can be written uniquely as $d=pe$, where
$e\mid P_0/p$ and $(p,e)=1$. Hence
$\varphi(pe)=(p-1)\varphi(e)$, and therefore
\[
 \sum_{\substack{d\mid P_0\\p\mid d}}\frac1{\varphi(d)}
 =\frac1{p-1}\sum_{e\mid P_0/p}\frac1{\varphi(e)}.
\]
On the other hand, separating the divisors of $P_0$ according
to whether they are divisible by $p$, we obtain
\[
 \begin{aligned}
 U_0
 &=\sum_{e\mid P_0/p}\frac1{\varphi(e)}
   +\sum_{e\mid P_0/p}\frac1{\varphi(pe)}\\
 &=\left(1+\frac1{p-1}\right)
   \sum_{e\mid P_0/p}\frac1{\varphi(e)}
 =\frac{p}{p-1}\sum_{e\mid P_0/p}\frac1{\varphi(e)}.
 \end{aligned}
\]
Consequently,
\[
 \sum_{\substack{d\mid P_0\\p\mid d}}\frac1{\varphi(d)}
 =\frac1{p-1}\cdot\frac{p-1}{p}\,U_0
 =\frac{U_0}{p},
\]and \eqref{eq:U0/p} is proved.

Let
\[
 \mathcal E=\{p\in\PP:z<p\le y,\ p\mid\Delta\}
\]
be the set of removed primes. Since $P_*\mid P_0$, we have
\[
 B_0-B
 =\sum_{\substack{d\mid P_0\\d\nmid P_*}}
       \frac1{\varphi(d)\log d}.
\]
Every divisor occurring in this sum is divisible by at least
one prime in $\mathcal E$. All summands are nonnegative, so
\[
 0\le B_0-B
 \le\sum_{p\in\mathcal E}
      \sum_{\substack{d\mid P_0\\p\mid d}}
          \frac1{\varphi(d)\log d}.
\]
A divisor containing several primes from $\mathcal E$ is counted
more than once on the right, which only increases the sum.

For each term in the inner sum, we have $d\ge p>z$, and hence
$1/\log d\le1/\log z$. Using \eqref{eq:U0/p}, \eqref{eq:exceptional}, \eqref{eq:full-estimates}, we obtain
\begin{equation}\label{eq:B-loss}
 \begin{aligned}
 0\le B_0-B
 &\le\frac1{\log z}
      \sum_{p\in\mathcal E}
      \sum_{\substack{d\mid P_0\\p\mid d}}
          \frac1{\varphi(d)}\\
 &=\frac{U_0}{\log z}
      \sum_{p\in\mathcal E}\frac1p\\
 &\ll\frac{\log y}{(\log z)^2}
      \frac1{x^3\log x}\\
 &\ll\frac1{x^2(\log x)^8}=o(1).
 \end{aligned}
\end{equation}
Since $B_0\asymp1$ and $B_0-B=o(1)$ by~\eqref{eq:B-loss}, we conclude that
$B\asymp1$. In particular, $P_*>1$ for sufficiently large $x$.

Put
\[
 \begin{gathered}
 a(1)=1,\qquad a(d)=-\frac1{B\log d}\quad(d\mid P_*,\ d>1),\\
 A_\gamma=\sum_{d\mid P_*}\frac{a(d)^2d^\gamma}{\varphi(d)},
 \qquad A=A_0.
 \end{gathered}
\]
\medskip\noindent\textit{The basic coefficient identities.}
We first prove each of the three assertions
\begin{equation}\label{eq:basic-coefficients}
 \sum_{d\mid P_*}\frac{a(d)}{\varphi(d)}=0,
 \qquad |a(d)|\le1,
 \qquad A=1+O((\log x)^{-2}).
\end{equation}
Here and below, a statement about $a(d)$ is understood to concern
divisors $d$ of $P_*$. Since $\varphi(1)=1$, the definition of $B$
and that of $a(d)$ give the exact cancellation
\[
 \sum_{d\mid P_*}\frac{a(d)}{\varphi(d)}
 =1-\frac1B\sum_{\substack{d\mid P_*\\d>1}}
                  \frac1{\varphi(d)\log d}
 =1-\frac BB=0.
\]
Next, $|a(1)|=1$. If $d>1$ divides $P_*$, then $d$ has a prime
factor greater than $z$, and consequently $d>z$. We have already
proved that $B\asymp1$, so there is an absolute constant $b_1>0$
such that $B\ge b_1$ for all sufficiently large $x$. Therefore
\[
 |a(d)|=\frac1{B\log d}
 \le\frac1{b_1\log z}
 =\frac1{6b_1\log x}\le1
 \qquad(d>1)
\]
once $x$ is sufficiently large.

Finally, separating the term $d=1$ in $A$ and using $P_*\mid P_0$,
we obtain
\[
 \begin{aligned}
 0\le A-1
 &=\frac1{B^2}\sum_{\substack{d\mid P_*\\d>1}}
                  \frac1{\varphi(d)(\log d)^2}\\
 &\le\frac1{B^2}\sum_{\substack{d\mid P_0\\d>1}}
                  \frac1{\varphi(d)(\log d)^2}
 \ll\frac1{(\log x)^2}.
 \end{aligned}
\]
The last estimate uses~\eqref{eq:full-estimates} and $B\asymp1$.
This proves~\eqref{eq:basic-coefficients}; in particular, $A\ge1$.

\medskip\noindent\textit{Estimates with a small power of the divisor.}
For $0\le\gamma\le2\beta$, define
\[
 U_\gamma=\sum_{d\mid P_0}\frac{d^\gamma}{\varphi(d)}.
\]
The function $d^\gamma/\varphi(d)$ is multiplicative, and $P_0$ is
squarefree. Expanding its Euler product and then dividing by the
corresponding product for $U_0$, we find
\[
 \begin{aligned}
 U_\gamma&=\prod_{z<p\le y}
                    \left(1+\frac{p^\gamma}{p-1}\right),\\
 \frac{U_\gamma}{U_0}
 &=\prod_{z<p\le y}
     \frac{1+p^\gamma/(p-1)}{1+1/(p-1)}
 =\prod_{z<p\le y}\left(1+\frac{p^\gamma-1}{p}\right).
 \end{aligned}
\]
For every prime in this product, $0\le\gamma\log p\le2$.
The identity $e^v-1=\int_0^v e^s\,ds$ therefore gives
\[
 0\le p^\gamma-1
 =e^{\gamma\log p}-1
 \le e^2\gamma\log p.
\]
Applying $1+h\le e^h$ for $h\ge0$ to the Euler factors, and then
using Mertens' estimate, yields
\[
 \frac{U_\gamma}{U_0}
 \le\exp\!\left(\sum_{z<p\le y}\frac{p^\gamma-1}{p}\right)
 \le\exp\!\left(e^2\gamma\sum_{p\le y}\frac{\log p}{p}\right)
 \le\exp(C\gamma\log y)\ll1.
\]
The last bound is uniform in $\gamma$, since $\gamma\log y\le2$.
Consequently, $U_\gamma\ll U_0\ll\log y/\log z$.

We next estimate the sum involving $|a(d)|$ by separating the
ranges $1<d\le y$ and $d>y$. In the first range, $d^\gamma\le e^2$,
and the definition of $B$ gives
\[
 \begin{aligned}
 \sum_{\substack{d\mid P_*\\1<d\le y}}
       \frac{|a(d)|d^\gamma}{\varphi(d)}
 &\le\frac{e^2}{B}
       \sum_{\substack{d\mid P_*\\1<d\le y}}
            \frac1{\varphi(d)\log d}\\
 &\le\frac{e^2}{B}
       \sum_{\substack{d\mid P_*\\d>1}}
            \frac1{\varphi(d)\log d}=e^2.
 \end{aligned}
\]
In the second range, $1/\log d\le1/\log y$. Since $P_*\mid P_0$,
we have
\[
 \begin{aligned}
 \sum_{\substack{d\mid P_*\\d>y}}
       \frac{|a(d)|d^\gamma}{\varphi(d)}
 &\le\frac1{B\log y}
       \sum_{d\mid P_*}\frac{d^\gamma}{\varphi(d)}\\
 &\le\frac{U_\gamma}{B\log y}
 \ll\frac1{\log z}\ll 1.
 \end{aligned}
\]
Adding the two estimates proves
\[
 \sum_{\substack{d\mid P_*\\d>1}}
       \frac{|a(d)|d^\gamma}{\varphi(d)}
 \ll 1.
\]

To control $A_\gamma$, observe that for $d\ge1$,
\[
 d^\gamma-1
 =\int_0^\gamma d^s\log d\,ds
 \le\gamma d^\gamma\log d.
\]
Also, for $d>1$, the definition of $a(d)$ implies
$a(d)^2\log d=|a(d)|/B$. It follows that
\[
 \begin{aligned}
 0\le A_\gamma-A
 &=\sum_{\substack{d\mid P_*\\d>1}}
       \frac{a(d)^2(d^\gamma-1)}{\varphi(d)}\\
 &\le\gamma\sum_{\substack{d\mid P_*\\d>1}}
       \frac{a(d)^2d^\gamma\log d}{\varphi(d)}\\
 &=\frac\gamma B\sum_{\substack{d\mid P_*\\d>1}}
       \frac{|a(d)|d^\gamma}{\varphi(d)}
 \ll\gamma.
 \end{aligned}
\]
Thus there is an absolute constant $C>0$ such that
\begin{equation}\label{eq:tilt}
 \sum_{\substack{d\mid P_*\\d>1}}
       \frac{|a(d)|d^\gamma}{\varphi(d)}\le C,
 \qquad A_\gamma\le A+C\gamma
 \qquad(0\le\gamma\le2\beta).
\end{equation}
For later use, fix an absolute constant $C_1\ge1$ such that
$A_\gamma\le A+C_1\gamma$ throughout this range.

\medskip\noindent\textit{Divisors containing a prescribed prime.}
Fix a prime $p\mid P_*$. Every divisor $d\mid P_*$ with $p\mid d$
has a unique representation $d=pe$, where $e\mid P_*/p$ and
$(p,e)=1$. Separating $e=1$ and using
$\varphi(pe)=(p-1)\varphi(e)$, we obtain
\[
 \begin{aligned}
 \sum_{\substack{d\mid P_*\\p\mid d}}
       \frac{|a(d)|}{\varphi(d)}
 &=\frac1{B(p-1)}
   \sum_{e\mid P_*/p}\frac1{\varphi(e)\log(pe)}\\
 &=\frac1{B(p-1)}\left(
      \frac1{\log p}
      +\sum_{\substack{e\mid P_*/p\\e>1}}
          \frac1{\varphi(e)\log(pe)}\right).
 \end{aligned}
\]
For $e>1$, we have $\log(pe)\ge\log e$, while every divisor of
$P_*/p$ is also a divisor of $P_*$. Hence
\[
 \sum_{\substack{e\mid P_*/p\\e>1}}
       \frac1{\varphi(e)\log(pe)}
 \le\sum_{\substack{e\mid P_*\\e>1}}
       \frac1{\varphi(e)\log e}=B.
\]
Substitution proves
\begin{equation}\label{eq:prime-coefficients}
 \sum_{\substack{d\mid P_*\\p\mid d}}
       \frac{|a(d)|}{\varphi(d)}
 \le\frac1{B(p-1)}\left(\frac1{\log p}+B\right)
 \ll\frac1p.
\end{equation}
Indeed, $B$ is bounded below by an absolute positive constant,
$\log p\ge\log z$, and $1/(p-1)\le2/p$.

\medskip\noindent\textit{Constructing the local factors and computing their moments.}
For $p\mid P_*$ and $d\mid P_*$, define
\[
 \psi_{i,p}(t)=\frac{1-p\ind_{p\mid L_i(t)}}{p-1},
 \qquad
 \psi_{i,d}(t)=\prod_{p\mid d}\psi_{i,p}(t),
 \qquad \psi_{i,1}(t)=1.
\]
The last convention agrees with the empty-product convention.
For a $P_*$-periodic function $F$, write
\[
 \E F=\frac1{P_*}\sum_{t=0}^{P_*-1}F(t).
\]

Fix a prime $p\mid P_*$. In the complete average $\E$, every
integer $t\in\{0,\ldots,P_*-1\}$ is assigned the same weight
$1/P_*$. For each residue $r\in\{0,\ldots,p-1\}$, the integers
in this range satisfying $t\equiv r\pmod p$ are precisely
\[
 r,\ r+p,\ \ldots,\
 r+\left(\frac{P_*}{p}-1\right)p.
\]
There are exactly $P_*/p$ such integers. Consequently,
\[
 \E\,\ind_{t\equiv r\pmod p}
 =\frac1{P_*}\cdot\frac{P_*}{p}
 =\frac1p.
\]
Thus all $p$ residue classes modulo $p$ have the same weight
under $\E$. This is the meaning of uniform distribution
modulo $p$ in the present setting.

More generally, for any function $f$ that is periodic modulo
$p$, grouping the terms according to their residues modulo
$p$ gives
\[
 \begin{aligned}
 \E f
 &=\frac1{P_*}\sum_{t=0}^{P_*-1}f(t)\\
 &=\frac1{P_*}\sum_{r=0}^{p-1}
       \sum_{h=0}^{P_*/p-1}f(r+hp)\\
 &=\frac1{P_*}\sum_{r=0}^{p-1}
       \frac{P_*}{p}f(r)
 =\frac1p\sum_{r=0}^{p-1}f(r).
 \end{aligned}
\]
In particular, this identity applies to the local factors
$\psi_{i,p}$ and their products.

We next explain why the residue coordinates for distinct primes
are independent under this average. Let $p_1,\ldots,p_\ell$
be distinct prime divisors of $P_*$, and fix arbitrary residues
$a_\nu\bmod p_\nu$. By the Chinese remainder theorem, the system
\[
 t\equiv a_\nu\pmod{p_\nu},\qquad 1\le\nu\le\ell,
\]
specifies exactly one residue class modulo $q=p_1\cdots p_\ell$.
Since $q\mid P_*$, this system has exactly $P_*/q$ solutions
among the integers $0\le t<P_*$. Consequently,
\[
 \E\prod_{\nu=1}^{\ell}
       \ind_{t\equiv a_\nu\pmod{p_\nu}}
 =\frac1{p_1\cdots p_\ell}
 =\prod_{\nu=1}^{\ell}
       \E\,\ind_{t\equiv a_\nu\pmod{p_\nu}}.
\]
Thus, when $t$ is chosen uniformly modulo $P_*$, the residues
$t\bmod p$, with $p\mid P_*$, are mutually independent and
each is uniformly distributed modulo $p$.

More generally, let $f_p(t)$ be any function depending only
on $t\bmod p$. Since $P_*$ is squarefree, the Chinese remainder
theorem gives a bijection
\[
 \mathbb Z/P_*\mathbb Z
 \longrightarrow
 \prod_{p\mid P_*}\mathbb Z/p\mathbb Z,
 \qquad
 t\longmapsto(t\bmod p)_{p\mid P_*}.
\]
Hence the average of a product of such functions factors as
\[
 \begin{aligned}
 \E\prod_{p\mid P_*}f_p(t)
 &=\frac1{P_*}
   \sum_{\substack{0\le a_p<p\\\text{for each }p\mid P_*}}
      \prod_{p\mid P_*}f_p(a_p)\\
 &=\prod_{p\mid P_*}
      \left(\frac1p\sum_{a=0}^{p-1}f_p(a)\right)
 =\prod_{p\mid P_*}\E f_p(t).
 \end{aligned}
\]
Each function $\psi_{i,p}(t)$ depends only on $t\bmod p$.
Therefore, to average a product of these functions, we first
group together all factors associated with the same prime $p$,
and then factor the average over the distinct primes.
We now compute the local moments needed for these factorizations.

Since $(Q,p)=1$, multiplication by $Q$ permutes the residue
classes modulo $p$. Therefore the congruence
\[
 L_i(t)=b+Qt+s_i\equiv0\pmod p
\]
has exactly one solution modulo $p$. By the definition of
$\psi_{i,p}$,
\[
 \psi_{i,p}(t)=
 \begin{cases}
  -1,&p\mid L_i(t),\\[1mm]
  \dfrac1{p-1},&p\nmid L_i(t).
 \end{cases}
\]
Hence $\psi_{i,p}$ takes the value $-1$ on one residue class
modulo $p$, of weight $1/p$, and the value $1/(p-1)$ on the
other $p-1$ classes, of total weight $(p-1)/p$.
Direct calculation gives
\[
 \E\psi_{i,p}
 =\frac1p\left(-1+(p-1)\frac1{p-1}\right)=0
\]
and
\[
 \E\psi_{i,p}^2
 =\frac1p\left(1+(p-1)\frac1{(p-1)^2}\right)
 =\frac1{p-1}.
\]
If $i\ne j$, the roots of $L_i$ and $L_j$ modulo $p$ are distinct:
a common root would imply $p\mid s_i-s_j$, contrary to
$p\nmid\Delta$. On each of these two root classes,
$\psi_{i,p}\psi_{j,p}=-1/(p-1)$; on the remaining $p-2$ classes,
the product is $1/(p-1)^2$. Therefore
\[
 \E(\psi_{i,p}\psi_{j,p})
 =\frac1p\left(-\frac2{p-1}+\frac{p-2}{(p-1)^2}\right)
 =-\frac1{(p-1)^2}.
\]
We have proved all the local moment identities
\begin{equation}\label{eq:local-moments}
 \E\psi_{i,p}=0,\qquad
 \E\psi_{i,p}^2=\frac1{p-1},\qquad
 \E(\psi_{i,p}\psi_{j,p})=-\frac1{(p-1)^2}\quad(i\ne j).
\end{equation}
The independence established above applies only to distinct prime
moduli. For a fixed prime $p$, the functions $\psi_{i,p}$ and
$\psi_{j,p}$ need not be independent. Indeed, for $i\ne j$,
\eqref{eq:local-moments} gives
\[
 \E(\psi_{i,p}\psi_{j,p})
 =-\frac1{(p-1)^2}
 \ne 0
 =(\E\psi_{i,p})(\E\psi_{j,p}).
\]

\medskip\noindent\textit{The truncated weight.}
For $0\le j\le k$, let
\[
 \mathcal D_j=\{(r_1,\ldots,r_j):r_i\mid P_*,\ (r_i,r_\ell)=1
 \ (i\ne\ell),\ r_1\cdots r_j\le D\},
\]
where $\mathcal D_0$ contains only the empty tuple, whose product
is $1$. The coprimality condition ensures that each prime occurs
in at most one coordinate, while the product condition bounds the
moduli that arise when the weight is expanded. Define
\[
 R(t)=\sum_{\bm r\in\mathcal D_k}
             \prod_{i=1}^k a(r_i)\psi_{i,r_i}(t),
 \qquad w(t)=R(t)^2,
\]
and
\[
 S_1=\frac1T\sum_{t=1}^T w(t),\qquad
 S_2=\frac1T\sum_{t=1}^T w(t)
                \sum_{i=1}^k\ind_{(L_i(t),P_*)=1}.
\]
We call $L_i(t)$ uncovered if $(L_i(t),P_*)=1$. The inner sum in
$S_2$ is the number of uncovered forms, a nonnegative integer.
If it were at least $1$ for every $t$, the nonnegativity of $w(t)$
would imply $S_2\ge S_1$. Thus $S_2<S_1$ will prove that some
$1\le t\le T$ has no uncovered form.

\medskip\noindent\textit{Expansion into divisibility indicators.}
For a squarefree divisor $r\mid P_*$, expanding the product defining
$\psi_{i,r}$ gives
\[
 \begin{aligned}
 \psi_{i,r}(t)
 &=\frac1{\varphi(r)}
           \prod_{p\mid r}(1-p\ind_{p\mid L_i(t)})\\
 &=\frac1{\varphi(r)}
           \sum_{d\mid r}(-1)^{\omega(d)}d\,
                       \ind_{d\mid L_i(t)}.
 \end{aligned}
\]
Here a subset of the prime factors of $r$ determines its product
$d$, and the corresponding product of indicators is exactly
$\ind_{d\mid L_i(t)}$. If $d_i\mid r_i$ for a tuple
$\bm r\in\mathcal D_k$, then the $d_i$ are pairwise coprime and
$d_1\cdots d_k\le r_1\cdots r_k\le D$. Hence, after collecting
coefficients, the expansion has the form
\[
 R(t)=\sum_{\bm d\in\mathcal D_k}\lambda_{\bm d}
                         \prod_{i=1}^k\ind_{d_i\mid L_i(t)}.
\]
We estimate the sum of absolute coefficients before collecting
equal indicator products. The triangle inequality gives
\[
 \begin{aligned}
 \sum_{\bm d\in\mathcal D_k}|\lambda_{\bm d}|
 &\le\sum_{\bm r\in\mathcal D_k}
       \prod_{i=1}^k\left(
          \frac{|a(r_i)|}{\varphi(r_i)}\sum_{d_i\mid r_i}d_i
       \right)\\
 &=\sum_{\bm r\in\mathcal D_k}
       \prod_{i=1}^k\frac{|a(r_i)|\sigma_1(r_i)}{\varphi(r_i)}\\
 &\le\sum_{\bm r\in\mathcal D_k}
       \frac{\sigma_1(r_1\cdots r_k)}{\varphi(r_1\cdots r_k)}.
 \end{aligned}
\]
The last step uses $|a(r_i)|\le1$ and the multiplicativity of
$\sigma_1$ and $\varphi$ on pairwise coprime arguments.

Fix a divisor $n\mid P_*$ with $n\le D$. We count the ordered
tuples $(r_1,\ldots,r_k)\in\mathcal D_k$ satisfying
$r_1\cdots r_k=n$. Suppose first that $n>1$. Since $P_*$
is squarefree, we may write
\[
 n=p_1\cdots p_\ell,\qquad \ell=\omega(n),
\]
where the primes $p_1,\ldots,p_\ell$ are distinct.

In every such tuple, each prime $p_j$ divides exactly one
coordinate. Indeed, it divides at least one coordinate because
$r_1\cdots r_k=n$, and it cannot divide two coordinates because
the coordinates are pairwise coprime. Thus the tuple determines
a map
\[
 f:\{1,\ldots,\ell\}\longrightarrow\{1,\ldots,k\},
\]
where $f(j)=i$ means that $p_j\mid r_i$.

Conversely, every such map determines a tuple by
\[
 r_i=\prod_{\substack{1\le j\le\ell\\f(j)=i}}p_j
 \qquad(1\le i\le k),
\]
with an empty product interpreted as $1$. These coordinates
are pairwise coprime divisors of $P_*$, and their product is
$n\le D$. Hence the resulting tuple belongs to $\mathcal D_k$.
The two constructions are inverse to each other, so this is
a bijection.

For each of the $\ell$ primes, there are $k$ independent
choices of its coordinate. Therefore
\[
 \#\{(r_1,\ldots,r_k)\in\mathcal D_k:r_1\cdots r_k=n\}
 =k^\ell=k^{\omega(n)}.
\]
For $n=1$, the only possible tuple is $(1,\ldots,1)$.
Since $\omega(1)=0$ and $k\ge1$, the same formula gives
$k^{\omega(1)}=1$.

It follows that
\[
 \sum_{\bm d\in\mathcal D_k}|\lambda_{\bm d}|
 \le\sum_{\substack{n\mid P_*\\n\le D}}
             k^{\omega(n)}\frac{\sigma_1(n)}{\varphi(n)}.
\]

We now bound both factors in each summand. All prime divisors of
$n$ exceed $z$, so
\begin{equation}\label{w(n).calc}
 \omega(n)\log z\le\sum_{p\mid n}\log p
 =\log n\le\log D.
\end{equation}
Since $n$ is squarefree and $p>z\ge3$, we also have
\[
 \begin{aligned}
 \frac{\sigma_1(n)}{\varphi(n)}
 &=\prod_{p\mid n}\frac{p+1}{p-1}
 =\prod_{p\mid n}\left(1+\frac2{p-1}\right)\\
 &\le\exp\!\left(\sum_{p\mid n}\frac2{p-1}\right)
 \le\exp\!\left(\frac{3\omega(n)}z\right)
 \le\exp\!\left(\frac{3\log D}{z\log z}\right)\le2
 \end{aligned}
\]
for sufficiently large $x$. The estimate includes $n=1$.
Furthermore, $k\ge1$, so
\[
 k^{\omega(n)}
 \le\exp\!\left(\frac{\log D\log k}{\log z}\right)
 =D^{\log k/\log z}.
\]
Since $k\le\eta x\le x$ and $\log z=6\log x$,
we have $\log k/\log z\le1/6$. Thus
\begin{equation}\label{eq:l1}
 \sum_{\bm d\in\mathcal D_k}|\lambda_{\bm d}|
 \le\sum_{\substack{n\mid P_*\\n\le D}}
             k^{\omega(n)}\frac{\sigma_1(n)}{\varphi(n)}
 \le2D^{1+\log k/\log z}\le D^{3/2}.
\end{equation}
The last inequality follows from $2D^{7/6}\le D^{3/2}$ for
$D\ge8$, which holds once $x$ is sufficiently large.

\medskip\noindent\textit{Comparison of the two averages.}
For $\bm d\in\mathcal D_k$, write
$J_{\bm d}(t)=\prod_{i=1}^k\ind_{d_i\mid L_i(t)}$.
Expanding the square of $R$ gives
\[
 R(t)^2=\sum_{\bm d,\bm e\in\mathcal D_k}
               \lambda_{\bm d}\lambda_{\bm e}
               J_{\bm d}(t)J_{\bm e}(t).
\]

Fix two tuples $\bm d,\bm e\in\mathcal D_k$. By definition,
\[
 J_{\bm d}(t)J_{\bm e}(t)
 =\prod_{i=1}^k
       \ind_{d_i\mid L_i(t)}\ind_{e_i\mid L_i(t)}.
\]
Thus this product equals $1$ if and only if all the conditions
\[
 d_i\mid L_i(t),\qquad e_i\mid L_i(t)
 \qquad(1\le i\le k)
\]
hold simultaneously. Put
\[
 q=\operatorname{lcm}(d_1,\ldots,d_k,e_1,\ldots,e_k),
\]
where $\operatorname{lcm}$ denotes the least common multiple.
Since all the coordinates divide $P_*$, we have $q\mid P_*$.

Suppose first that $(d_i,e_j)>1$ for some distinct indices
$i$ and $j$. Choose a prime $p\mid(d_i,e_j)$. If the product
$J_{\bm d}(t)J_{\bm e}(t)$ were equal to $1$, then
\[
 p\mid L_i(t),\qquad p\mid L_j(t),
\]
and consequently
\[
 p\mid L_i(t)-L_j(t)=s_i-s_j.
\]
This is impossible: $p\mid P_*$, whereas every prime divisor
of a difference $s_i-s_j$ has been excluded from $P_*$.
Therefore
\[
 J_{\bm d}(t)J_{\bm e}(t)=0
 \qquad\text{for every }t\in\cZ.
\]

Suppose now that
\[
 (d_i,e_j)=1\qquad\text{for all }i\ne j.
\]
For each $i$, define
\[
 m_i=\operatorname{lcm}(d_i,e_i).
\]
The two divisibility conditions for the $i$th form are
equivalent to a single condition:
\[
 d_i\mid L_i(t)\ \text{and}\ e_i\mid L_i(t)
 \quad\Longleftrightarrow\quad
 m_i\mid L_i(t).
\]
Moreover, since $\bm d,\bm e\in\mathcal D_k$, their coordinates
are pairwise coprime within each tuple. Together with the
assumed cross-coprimality, this gives
\[
 (d_i,d_j)=(e_i,e_j)=(d_i,e_j)=(e_i,d_j)=1
 \qquad(i\ne j).
\]
Hence $(m_i,m_j)=1$ whenever $i\ne j$, and therefore
\[
 m_1\cdots m_k
 =\operatorname{lcm}(m_1,\ldots,m_k)
 =q.
\]

Each $m_i$ divides $P_*$, so $(Q,m_i)=1$. It follows that
the congruence
\[
 b+Qt+s_i\equiv0\pmod{m_i}
\]
determines exactly one residue class modulo $m_i$.
When $m_i=1$, this condition is satisfied by every integer.
Since the moduli $m_i$ are pairwise coprime, the Chinese
remainder theorem shows that the entire system is equivalent
to one congruence
\[
 t\equiv a\pmod q
\]
for a uniquely determined residue class $a\bmod q$.
Consequently,
\[
 J_{\bm d}(t)J_{\bm e}(t)=
 \begin{cases}
  0,
  &\text{if }(d_i,e_j)>1\text{ for some }i\ne j,\\[1mm]
  \ind_{t\equiv a\pmod q},
  &\text{if }(d_i,e_j)=1\text{ for all }i\ne j.
 \end{cases}
\]
If $q=1$, the indicator in the second case is identically $1$.

We now compare the complete average with the average over
$1\le t\le T$. In the first case, both averages vanish.
In the second case, since $q\mid P_*$, exactly $P_*/q$
integers in $\{0,\ldots,P_*-1\}$ belong to the class
$a\bmod q$. Thus
\[
 \E(J_{\bm d}J_{\bm e})
 =\frac1{P_*}
   \#\{0\le t<P_*:t\equiv a\pmod q\}
 =\frac1q.
\]

To estimate the number of representatives in the shorter
interval, write
\[
 T=\ell q+h,\qquad 0\le h<q.
\]
Each of the $\ell$ complete blocks of $q$ consecutive
integers contains exactly one representative of $a\bmod q$,
and the remaining block contains at most one. Hence
\[
 \#\{1\le t\le T:t\equiv a\pmod q\}
 \in\{\ell,\ell+1\}.
\]
Since $T/q=\ell+h/q$, we obtain
\[
 \left|
 \#\{1\le t\le T:t\equiv a\pmod q\}-\frac Tq
 \right|\le1.
\]
Dividing by $T$ proves
\[
 \left|
 \frac1T\sum_{t=1}^T J_{\bm d}(t)J_{\bm e}(t)
 -\E(J_{\bm d}J_{\bm e})
 \right|\le\frac1T.
\]
This estimate therefore holds in both cases, for every pair
$\bm d,\bm e\in\mathcal D_k$, uniformly in $b$ and the shifts.

Applying the triangle inequality to the expansion of $R^2$ and
using~\eqref{eq:l1}, we obtain
\begin{equation}\label{eq:average-error}
 \begin{aligned}
 |S_1-\E R^2|
 &\le\frac1T\sum_{\bm d,\bm e}
                  |\lambda_{\bm d}|\,|\lambda_{\bm e}|\\
 &=\frac1T\left(\sum_{\bm d}|\lambda_{\bm d}|\right)^2
 \le\frac{D^3}{T}.
 \end{aligned}
\end{equation}
The bound is uniform in $b$ and in the shifts: the count of a
residue class depends on its modulus, but its error bound does
not depend on the position of the class.

\medskip\noindent\textit{Controlling the off-diagonal terms.}
We prove a second-moment estimate that will be applied twice.
For arbitrary real coefficients $c_{\bm r}$ on $\mathcal D_j$,
where $0\le j\le k$, put
\[
 F(t)=\sum_{\bm r\in\mathcal D_j}c_{\bm r}
                           \prod_{i=1}^j\psi_{i,r_i}(t),
 \qquad
 I=\sum_{\bm r\in\mathcal D_j}
                           \frac{c_{\bm r}^2}
                                {\prod_{i=1}^j\varphi(r_i)}.
\]
We claim that
\begin{equation}\label{eq:gram}
 |\E F^2-I|\ll\frac{k\log D}{z}\,I.
\end{equation}
For $j=0$, $F$ is a constant, so $\E F^2=I$ and there is nothing
to prove. Assume that $1\le j\le k$.

For $\bm r\in\mathcal D_j$, write $n(\bm r)=r_1\cdots r_j$.
The coordinates are pairwise coprime, so this product is
squarefree and
$\varphi(n(\bm r))=\prod_i\varphi(r_i)$. Introduce the normalized
functions and coefficients
\[
 \Psi_{\bm r}(t)
   =\sqrt{\varphi(n(\bm r))}\prod_{i=1}^j\psi_{i,r_i}(t),
 \qquad
 b_{\bm r}=\frac{c_{\bm r}}{\sqrt{\varphi(n(\bm r))}}.
\]
Then $F=\sum_{\bm r}b_{\bm r}\Psi_{\bm r}$ and
$I=\sum_{\bm r}b_{\bm r}^2$. By the independence of the residue
coordinates $t\bmod p$ for distinct primes $p\mid P_*$,
established above, and by \eqref{eq:local-moments}, we obtain
\[
 \E\Psi_{\bm r}^2
 =\varphi(n(\bm r))
    \prod_{p\mid n(\bm r)}\frac1{p-1}=1.
\]
Thus $I$ is exactly the diagonal part of the expansion of
$\E F^2$.

Let $M_{\bm r,\bm s}=\E(\Psi_{\bm r}\Psi_{\bm s})$ be the
matrix of normalized inner products. If $n(\bm r)\ne n(\bm s)$,
some prime divides precisely one of these two squarefree
products. At that prime the local average is $\E\psi_{i,p}=0$,
so independence implies $M_{\bm r,\bm s}=0$.
It remains to consider tuples with the same product $n$.

Such a tuple assigns each prime $p\mid n$ to one of the $j$
coordinates. For fixed $p$, the normalized local factors are
$\sqrt{p-1}\,\psi_{i,p}$. Their inner products are
\[
 \E\bigl((\sqrt{p-1}\,\psi_{i,p})
         (\sqrt{p-1}\,\psi_{\ell,p})\bigr)
 =\begin{cases}
    1,&i=\ell,\\
    -1/(p-1),&i\ne\ell,
  \end{cases}
\]
by~\eqref{eq:local-moments}. Fix a tuple $\bm r\in\mathcal D_j$ and put
$n=n(\bm r)$. For each prime $p\mid n$, let
$\iota_{\bm r}(p)$ denote the unique index $i$ such that
$p\mid r_i$. This index is unique because the coordinates
of $\bm r$ are pairwise coprime.

Now consider another tuple $\bm s\in\mathcal D_j$ with
$n(\bm s)=n$, and define $\iota_{\bm s}(p)$ similarly.
The normalized functions can be written as
\[
 \Psi_{\bm r}(t)
 =
 \prod_{p\mid n}
 \bigl(\sqrt{p-1}\,\psi_{\iota_{\bm r}(p),p}(t)\bigr),
\]
and likewise for $\Psi_{\bm s}$. By independence across
distinct prime moduli,
\[
 M_{\bm r,\bm s}
 =
 \prod_{p\mid n}
 \left(
 (p-1)\,
 \E\bigl(
 \psi_{\iota_{\bm r}(p),p}
 \psi_{\iota_{\bm s}(p),p}
 \bigr)
 \right).
\]
For each fixed $p\mid n$, the local factor is $1$ if
$\iota_{\bm s}(p)=\iota_{\bm r}(p)$, and it is
$-1/(p-1)$ otherwise. Define
\[
 h_p(\ell)=
 \begin{cases}
  1,&\ell=\iota_{\bm r}(p),\\[1mm]
  \dfrac1{p-1},&\ell\ne\iota_{\bm r}(p),
 \end{cases}
 \qquad 1\le\ell\le j.
\]
Taking absolute values in the preceding product gives
\[
 |M_{\bm r,\bm s}|
 =
 \prod_{p\mid n}h_p\bigl(\iota_{\bm s}(p)\bigr).
\]

We next describe precisely the tuples over which we sum.
For each prime $p\mid n$, choose an index
$f(p)\in\{1,\ldots,j\}$, and set
\[
 s_\ell=
 \prod_{\substack{p\mid n\\f(p)=\ell}}p,
 \qquad 1\le\ell\le j,
\]
where an empty product is understood to be $1$.
Every such choice produces a tuple in $\mathcal D_j$:
each $s_\ell$ divides $P_*$, the coordinates are pairwise
coprime, and their product is $n\le D$.
Conversely, every tuple $\bm s\in\mathcal D_j$ with
$n(\bm s)=n$ arises from exactly one such choice,
namely $f(p)=\iota_{\bm s}(p)$.

Thus the coordinate assigned to each prime can be chosen
freely, with no further restriction linking the choices
for different primes. Summing over all tuples with
product $n$ is therefore equivalent to summing over
all maps
\[
 f:\{p\in\PP:p\mid n\}\longrightarrow\{1,\ldots,j\}.
\]
By the distributive law for finite sums and products,
\[
 \begin{aligned}
 \sum_{\substack{\bm s\in\mathcal D_j\\n(\bm s)=n}}
       |M_{\bm r,\bm s}|
 &=
 \sum_{f:\{p\in\PP:p\mid n\}\to\{1,\ldots,j\}}
       \prod_{p\mid n}h_p(f(p))\\
 &=
 \prod_{p\mid n}
       \left(\sum_{\ell=1}^j h_p(\ell)\right)\\
 &=
 \prod_{p\mid n}
       \left(1+\frac{j-1}{p-1}\right).
 \end{aligned}
\]
Indeed, for each prime $p$, exactly one choice of $\ell$
agrees with $\iota_{\bm r}(p)$ and contributes $1$;
the remaining $j-1$ choices each contribute $1/(p-1)$.

This sum includes the diagonal entry
$|M_{\bm r,\bm r}|=1$, corresponding to the choice
$f(p)=\iota_{\bm r}(p)$ for every $p\mid n$.
Moreover, we have already shown that
$M_{\bm r,\bm s}=0$ whenever $n(\bm s)\ne n$.
Hence removing the diagonal entry gives the full
off-diagonal row sum:
\[
 \sum_{\substack{\bm s\in\mathcal D_j\\\bm s\ne\bm r}}
       |M_{\bm r,\bm s}|
 =
 \prod_{p\mid n}
       \left(1+\frac{j-1}{p-1}\right)-1.
\]
For $n=1$ this is $0$. Otherwise, $p>z$ and
$\omega(n)\le\log D/\log z$ (see \eqref{w(n).calc}) imply
\[
 \sum_{p\mid n}\frac{j-1}{p-1}
 \le\frac{2k\omega(n)}z
 \le\frac{2k\log D}{z\log z}
 \ll\frac{k\log D}{z}=O(x^{-4}).
\]
Here we used $k\le x$, $\log D=x/4$, and $z=x^6$.
Applying $1+h\le e^h$ and then $e^v-1\ll v$ for $0\le v\le1$,
we conclude that the absolute off-diagonal row sums satisfy
\[
 \rho:=\max_{\bm r\in\mathcal D_j}
       \sum_{\substack{\bm s\in\mathcal D_j\\\bm s\ne\bm r}}
       |M_{\bm r,\bm s}|
 \ll\frac{k\log D}{z}.
\]

Finally, $M$ is symmetric. Expanding $\E F^2$, removing its
diagonal part, and using $2|ab|\le a^2+b^2$, we obtain
\[
 \begin{aligned}
 |\E F^2-I|
 &\le\sum_{\bm r\ne\bm s}
         |M_{\bm r,\bm s}|\,|b_{\bm r}b_{\bm s}|\\
 &\le\frac12\sum_{\bm r\ne\bm s}
         |M_{\bm r,\bm s}|(b_{\bm r}^2+b_{\bm s}^2)\\
 &=\sum_{\bm r}b_{\bm r}^2
       \sum_{\bm s\ne\bm r}|M_{\bm r,\bm s}|\\
 &\le\rho\sum_{\bm r}b_{\bm r}^2
 \ll\frac{k\log D}{z}\,I.
 \end{aligned}
\]
This proves \eqref{eq:gram} for arbitrary real coefficients
$c_{\bm r}$. 

\medskip\noindent\textit{The diagonal mass of the weight.}
For the coefficients of $R$, put
\[
 I_R=\sum_{\bm r\in\mathcal D_k}
                 \prod_{i=1}^k\frac{a(r_i)^2}{\varphi(r_i)}.
\]
Our aim is to prove that
\[
 I_R=(1+o(1))A^k.
\]
We first compute the larger sum obtained by
removing both the pairwise coprimality condition and
the restriction $r_1\cdots r_k\le D$.

In this unrestricted sum, each coordinate $r_i$ ranges
freely over all divisors of $P_*$. Since the $i$th factor
depends only on $r_i$, the sum factors as
\[
 \begin{aligned}
 \sum_{r_1\mid P_*}\cdots\sum_{r_k\mid P_*}
       \prod_{i=1}^k\frac{a(r_i)^2}{\varphi(r_i)}
 &=
 \prod_{i=1}^k
       \left(
       \sum_{r_i\mid P_*}
       \frac{a(r_i)^2}{\varphi(r_i)}
       \right)\\
 &=A^k,
 \end{aligned}
\]
where the last equality follows from the definition of $A$.

The sum defining $I_R$ contains only the tuples belonging
to $\mathcal D_k$. Thus $I_R$ is obtained from the
unrestricted sum by discarding some nonnegative terms.
In particular,
\[
 \begin{aligned}
 0\le A^k-I_R
 &=
 \sum_{\substack{r_i\mid P_*\ (1\le i\le k)\\
                  (r_1,\ldots,r_k)\notin\mathcal D_k}}
       \prod_{i=1}^k\frac{a(r_i)^2}{\varphi(r_i)}.
 \end{aligned}
\]

A tuple is discarded if its product exceeds $D$, or if
two of its coordinates have a common prime divisor.
We estimate separately the total contribution of each
of these two types of excluded tuples.
A tuple violating both conditions may be counted twice;
this is harmless because we seek an upper bound and
all summands are nonnegative.

It therefore suffices to show that each of these two
contributions is $o(A^k)$. This will imply that
$A^k-I_R=o(A^k)$, and hence that $I_R/A^k\to1$.

First, for $r_1\cdots r_k>D$ and $\beta>0$, we have
$1\le(r_1\cdots r_k/D)^\beta$. Thus the mass beyond the cutoff
is at most
\[
 \begin{aligned}
 &\sum_{\substack{r_i\mid P_*\ (1\le i\le k)\\
                   r_1\cdots r_k>D}}
              \prod_{i=1}^k\frac{a(r_i)^2}{\varphi(r_i)}\\
 &\qquad\le D^{-\beta}
       \sum_{r_1\mid P_*}\cdots\sum_{r_k\mid P_*}
              \prod_{i=1}^k\frac{a(r_i)^2r_i^\beta}{\varphi(r_i)}
       =D^{-\beta}A_\beta^k.
 \end{aligned}
\]
Since $A\ge1$ and $A_\beta\le A+C_1\beta$, we have
\[
 A_\beta^k
 \le A^k\left(1+\frac{C_1\beta}{A}\right)^k
 \le A^k e^{C_1k\beta}.
\]
We now choose the constant in the lemma so that
\[
 0<\eta\le\min\left(1,\frac1{8C_1}\right).
\]
This ensures
$C_1k\le C_1\eta x\le x/8=\tfrac12\log D$.
Because $u=\beta\log D$, it follows that
\begin{equation}\label{eq:tail}
 D^{-\beta}A_\beta^k
 \le A^k\exp(-u+C_1k\beta)
 \le A^k e^{-u/2}.
\end{equation}

Second, we estimate the contribution of tuples whose
coordinates are not pairwise coprime. If $k=1$, there
are no pairs of distinct coordinates, so this
contribution is zero. We therefore assume that $k\ge2$.

If a tuple $(r_1,\ldots,r_k)$ fails the pairwise
coprimality condition, then $(r_i,r_j)>1$ for some
indices $i<j$. Hence there is a prime $p$ such that
\[
 p\mid r_i,\qquad p\mid r_j.
\]
Since every coordinate divides $P_*$, this prime
also divides $P_*$.

For each prime $p\mid P_*$, put
\[
 B_p=
 \sum_{\substack{d\mid P_*\\p\mid d}}
       \frac{a(d)^2}{\varphi(d)}.
\]
Fix a pair $i<j$ and a prime $p\mid P_*$. Consider all
tuples of divisors of $P_*$ satisfying
$p\mid r_i$ and $p\mid r_j$, without imposing either
the product cutoff or any coprimality conditions.
The coordinates can then be summed independently.
Each of the two specified coordinates contributes
$B_p$, whereas each of the remaining $k-2$ coordinates
contributes
\[
 \sum_{d\mid P_*}\frac{a(d)^2}{\varphi(d)}=A.
\]
Consequently,
\[
 \begin{aligned}
 &\sum_{\substack{r_\ell\mid P_*\ (1\le\ell\le k)\\
                  p\mid r_i,\ p\mid r_j}}
       \prod_{\ell=1}^k
       \frac{a(r_\ell)^2}{\varphi(r_\ell)}
 \\
 &\qquad=
 \left(
 \sum_{\substack{d\mid P_*\\p\mid d}}
       \frac{a(d)^2}{\varphi(d)}
 \right)^2
 \prod_{\substack{1\le\ell\le k\\\ell\ne i,j}}
 \left(
 \sum_{r_\ell\mid P_*}
       \frac{a(r_\ell)^2}{\varphi(r_\ell)}
 \right)
 \\
 &\qquad=B_p^2A^{k-2}.
 \end{aligned}
\]
For $k=2$, the product over the remaining coordinates
is empty and equals $1$, in agreement with $A^0=1$.

Every tuple failing pairwise coprimality is included
for at least one pair $(i,j)$ and at least one prime
$p\mid P_*$. A tuple may be included several times,
but all weights are nonnegative, so summing over
all pairs and primes gives an upper bound:
\[
 \begin{aligned}
 &\sum_{\substack{r_\ell\mid P_*\ (1\le\ell\le k)\\
          (r_i,r_j)>1\ \text{for some }i<j}}
       \prod_{\ell=1}^k
       \frac{a(r_\ell)^2}{\varphi(r_\ell)}
 \\
 &\qquad\le
 \sum_{1\le i<j\le k}\sum_{p\mid P_*}
       A^{k-2}B_p^2
 =
 \binom{k}{2}A^{k-2}\sum_{p\mid P_*}B_p^2.
 \end{aligned}
\]

It remains to bound $B_p$. By
\eqref{eq:basic-coefficients}, we have $|a(d)|\le1$,
and therefore $a(d)^2\le|a(d)|$. Applying
\eqref{eq:prime-coefficients}, we obtain
\[
 B_p
 \le
 \sum_{\substack{d\mid P_*\\p\mid d}}
       \frac{|a(d)|}{\varphi(d)}
 \ll\frac1p.
\]
All prime divisors of $P_*$ exceed $z$. Hence
\[
 \sum_{p\mid P_*}B_p^2
 \ll
 \sum_{p\mid P_*}\frac1{p^2}
 \le
 \sum_{\substack{n\in\mathbb N\\n>z}}\frac1{n^2}
 \ll\frac1z.
\]
For completeness, the last estimate follows from
\[
 \sum_{\substack{n\in\mathbb N\\n>z}}\frac1{n^2}
 \le
 \int_{\lfloor z\rfloor}^{\infty}\frac{dt}{t^2}
 =
 \frac1{\lfloor z\rfloor}
 \le\frac2z
 \qquad(z\ge2).
\]
Substituting these bounds and using $A\ge1$, we conclude
that
\[
 \sum_{\substack{r_\ell\mid P_*\ (1\le\ell\le k)\\
          (r_i,r_j)>1\ \text{for some }i<j}}
       \prod_{\ell=1}^k
       \frac{a(r_\ell)^2}{\varphi(r_\ell)}
 \ll
 \frac{k^2}{z}A^{k-2}
 \le
 \frac{k^2}{z}A^k.
\]
Thus, the total weight of all tuples with
$(r_i,r_j)>1$ for some $i<j$ is $O(k^2A^k/z)$.

Combining the tail bound~\eqref{eq:tail} with the preceding
coprimality bound, we have
\[
 0\le A^k-I_R
 \ll A^k\left(e^{-u/2}+\frac{k^2}{z}\right).
\]
Here $u=\tfrac14(\log x)^5\to\infty$ and
$k^2/z\le x^2/x^6=x^{-4}\to0$. Hence
\[
 I_R=A^k\left(1+O\!\left(e^{-u/2}+\frac{k^2}{z}\right)\right)
 =(1+o(1))A^k.
\]We now apply \eqref{eq:gram} to the function $R$.
In the general definition of $F$, take
\[
 j=k,
 \qquad
 c_{\bm r}=\prod_{i=1}^k a(r_i)
 \quad(\bm r\in\mathcal D_k).
\]
With this choice,
\[
 \begin{aligned}
 F(t)
 &=
 \sum_{\bm r\in\mathcal D_k}
 c_{\bm r}\prod_{i=1}^k\psi_{i,r_i}(t)\\
 &=
 \sum_{\bm r\in\mathcal D_k}
 \prod_{i=1}^k
       \bigl(a(r_i)\psi_{i,r_i}(t)\bigr)
 =R(t).
 \end{aligned}
\]
Moreover, the corresponding diagonal sum is precisely
$I_R$, since
\[
 \begin{aligned}
 I
 &=
 \sum_{\bm r\in\mathcal D_k}
 \frac{c_{\bm r}^{\,2}}
      {\prod_{i=1}^k\varphi(r_i)}\\
 &=
 \sum_{\bm r\in\mathcal D_k}
 \frac{\left(\prod_{i=1}^k a(r_i)\right)^2}
      {\prod_{i=1}^k\varphi(r_i)}\\
 &=
 \sum_{\bm r\in\mathcal D_k}
 \prod_{i=1}^k\frac{a(r_i)^2}{\varphi(r_i)}
 =I_R.
 \end{aligned}
\]
Thus \eqref{eq:gram}, with $F=R$ and $I=I_R$, gives
\[
 \left|\E R^2-I_R\right|
 \ll\frac{k\log D}{z}\,I_R.
\]
Since $I_R\le A^k$, it follows that
\[
 \E R^2
 =
 I_R+O\!\left(\frac{k\log D}{z}A^k\right).
\]

We have already proved that
\[
 I_R
 =
 A^k+
 O\!\left(
 A^k\left(e^{-u/2}+\frac{k^2}{z}\right)
 \right).
\]
Substituting this estimate into the preceding identity
and combining the error terms, we obtain
\[
 \begin{aligned}
 \E R^2
 &=
 A^k+
 O\!\left(
 A^k\left(
 e^{-u/2}+\frac{k^2}{z}+\frac{k\log D}{z}
 \right)
 \right)\\
 &=
 A^k\left(
 1+O\!\left(
 e^{-u/2}+\frac{k^2}{z}+\frac{k\log D}{z}
 \right)
 \right).
 \end{aligned}
\]
Finally, $T=\lfloor e^x\rfloor\ge e^x/2$ for large $x$, so
\[
 \frac{D^3}{T}\le2e^{3x/4-x}=2e^{-x/4}.
\]
Since $A^k\ge1$, the absolute error in~\eqref{eq:average-error}
is also a relative error of size at most $2e^{-x/4}$ after
division by $A^k$. Therefore
\[
 \frac{S_1}{A^k}
 =1+O\!\left(e^{-u/2}+\frac{k^2}{z}
                  +\frac{k\log D}{z}+e^{-x/4}\right)
 =1+o(1).
\]
All terms tend to zero uniformly for $k\le\eta x$. In particular,
\begin{equation}\label{eq:S1}
 S_1=(1+o(1))A^k>0
\end{equation}
for sufficiently large $x$.

\medskip\noindent\textit{Collapsing the coordinate of an uncovered form.}
We first estimate the contribution of $L_k$; the same argument
will apply to each other form after relabelling. For $v\mid P_*$
with $v\le D$, define
\[
 \varepsilon(v)=
   \sum_{\substack{d\mid P_*\\d\le D/v\\(d,v)=1}}
                    \frac{a(d)}{\varphi(d)},
\]
and put
\[
 R_k(t)=\sum_{\bm r\in\mathcal D_{k-1}}
       \varepsilon(r_1\cdots r_{k-1})
       \prod_{i=1}^{k-1}a(r_i)\psi_{i,r_i}(t).
\]
To explain this definition, fix the first $k-1$ coordinates of
a tuple in $\mathcal D_k$ and write $v=r_1\cdots r_{k-1}$.
The admissible last coordinates are exactly those divisors
$d\mid P_*$ satisfying $(d,v)=1$ and $d\le D/v$. Grouping the
terms of $R$ according to their first $k-1$ coordinates gives
the exact identity
\[
 R(t)=\sum_{\bm r\in\mathcal D_{k-1}}
       \left(\prod_{i=1}^{k-1}a(r_i)\psi_{i,r_i}(t)\right)
       \sum_{\substack{d\mid P_*\\d\le D/v\\(d,v)=1}}
                         a(d)\psi_{k,d}(t),
 \qquad v=r_1\cdots r_{k-1}.
\]
If $(L_k(t),P_*)=1$, then $\ind_{p\mid L_k(t)}=0$ for every
$p\mid P_*$, and consequently
\[
 \psi_{k,d}(t)
 =\prod_{p\mid d}\frac1{p-1}
 =\frac1{\varphi(d)}\qquad(d\mid P_*).
\]
The inner sum is then $\varepsilon(v)$, so $R(t)=R_k(t)$.
To justify the following inequality, consider the two
possible cases. If $(L_k(t),P_*)=1$, then $R(t)=R_k(t)$
by the preceding argument, and the indicator equals $1$.
Since $w(t)=R(t)^2$, we therefore have
\[
 w(t)\ind_{(L_k(t),P_*)=1}
 =R(t)^2
 =R_k(t)^2.
\]
If $(L_k(t),P_*)>1$, then the indicator equals $0$, so
\[
 w(t)\ind_{(L_k(t),P_*)=1}
 =0\le R_k(t)^2,
\]
where the inequality follows from the nonnegativity
of a square. Thus, in both cases, we obtain
\begin{equation}\label{eq:uncovered}
 w(t)\ind_{(L_k(t),P_*)=1}\le R_k(t)^2.
\end{equation}
When $k=1$, the set $\mathcal D_0$ contains only the empty tuple,
and these formulas mean $v=1$ and $R_1(t)=\varepsilon(1)$.

We also need to compare the two averages of $R_k^2$. Algebraically,
$R_k$ is obtained from $R$ by setting every prime divisibility
indicator for the $k$th form equal to zero. Indeed, this replaces
$\psi_{k,d}$ by $1/\varphi(d)$ in the product expansion.
In the expansion with coefficients $\lambda_{\bm d}$, this
operation removes exactly the terms with $d_k>1$, and leaves
those with $d_k=1$. Hence
\[
 R_k(t)=\sum_{(d_1,\ldots,d_{k-1})\in\mathcal D_{k-1}}
       \lambda_{(d_1,\ldots,d_{k-1},1)}
       \prod_{i=1}^{k-1}\ind_{d_i\mid L_i(t)}.
\]
In particular, by \eqref{eq:l1} its coefficient sum satisfies
\[
 \sum_{\bm d\in\mathcal D_{k-1}}
       |\lambda_{(\bm d,1)}|
 \le\sum_{\bm d\in\mathcal D_k}|\lambda_{\bm d}|
 \le D^{3/2}.
\]
Every coordinate product still lies below $D$. The residue-class
count used to prove~\eqref{eq:average-error} therefore applies
without change and gives
\begin{equation}\label{eq:collapsed-average}
 \left|\frac1T\sum_{t=1}^T R_k(t)^2-\E R_k^2\right|
 \le\frac1T\left(\sum_{\bm d\in\mathcal D_{k-1}}
                     |\lambda_{(\bm d,1)}|\right)^2
 \le\frac{D^3}{T}.
\end{equation}

\medskip\noindent\textit{Estimating the remaining coefficient sum.}
The full sum $\sum_{d\mid P_*}a(d)/\varphi(d)$ is zero by
\eqref{eq:basic-coefficients}. Since $v\le D$, the divisor $d=1$
occurs in the sum defining $\varepsilon(v)$. Every omitted divisor
is therefore greater than $1$, and its coefficient $a(d)$ is
negative. Subtracting the omitted terms from the zero full sum
gives the exact identity
\[
 \varepsilon(v)
 =-\sum_{\substack{d\mid P_*\\d>D/v\ \text{or }(d,v)>1}}
                 \frac{a(d)}{\varphi(d)}
 =\sum_{\substack{d\mid P_*\\d>D/v\ \text{or }(d,v)>1}}
                 \frac{|a(d)|}{\varphi(d)}\ge0.
\]
The condition $(d,v)>1$ means that some prime dividing $v$
also divides $d$. Taking the sum over these two possible reasons
for exclusion, with any multiple counting increasing the bound,
we obtain
\[
 \varepsilon(v)
 \le\sum_{\substack{d\mid P_*\\d>D/v}}
                    \frac{|a(d)|}{\varphi(d)}
    +\sum_{p\mid v}\sum_{\substack{d\mid P_*\\p\mid d}}
                    \frac{|a(d)|}{\varphi(d)}.
\]
For the first sum, $d>D/v$ implies $1\le(vd/D)^\beta$, so
\[
 \begin{aligned}
 \sum_{\substack{d\mid P_*\\d>D/v}}
          \frac{|a(d)|}{\varphi(d)}
 &\le\left(\frac vD\right)^\beta
       \sum_{\substack{d\mid P_*\\d>D/v}}
          \frac{|a(d)|d^\beta}{\varphi(d)}\\
 &\le\left(\frac vD\right)^\beta
       \sum_{\substack{d\mid P_*\\d>1}}
          \frac{|a(d)|d^\beta}{\varphi(d)}
 \ll\left(\frac vD\right)^\beta,
 \end{aligned}
\]
by~\eqref{eq:tilt}. For the second sum, use
\eqref{eq:prime-coefficients}, $p>z$, and
$\omega(v)\le\log v/\log z$ to get
\[
 \begin{aligned}
 \sum_{p\mid v}\sum_{\substack{d\mid P_*\\p\mid d}}
          \frac{|a(d)|}{\varphi(d)}
 &\ll\sum_{p\mid v}\frac1p
 \le\frac{\omega(v)}z\\
 &\le\frac{\log v}{z\log z}
 \le\frac{\log D}{z\log z}
 \le\frac{\log D}{z}.
 \end{aligned}
\]
For $v=1$, both prime sums are empty and these inequalities still
hold. Combining the estimates proves
\[
 0\le\varepsilon(v)
 \ll(v/D)^\beta+\frac{\log D}{z}.
\]
Squaring and applying $(a+b)^2\le2a^2+2b^2$, we also have
\begin{equation}\label{est:eps.2}
 \varepsilon(v)^2
 \ll D^{-2\beta}v^{2\beta}
      +\frac{(\log D)^2}{z^2}.
\end{equation}

\medskip\noindent\textit{The second moment of the collapsed weight.}
The diagonal sum for $R_k$ is
\[
 I_k=\sum_{\bm r\in\mathcal D_{k-1}}
       \varepsilon(v)^2
       \prod_{i=1}^{k-1}\frac{a(r_i)^2}{\varphi(r_i)},
 \qquad v=r_1\cdots r_{k-1}.
\]
For every tuple $\bm r\in\mathcal D_{k-1}$, the product
$v=r_1\cdots r_{k-1}$ satisfies $v\mid P_*$ and $v\le D$.
By \eqref{est:eps.2}, we therefore have
\[
 \varepsilon(v)^2
 \le C\left(
 D^{-2\beta}v^{2\beta}
 +\frac{(\log D)^2}{z^2}
 \right)
\]
with an absolute constant $C>0$, uniformly over these
tuples. Moreover,
\[
 v^{2\beta}
 =(r_1\cdots r_{k-1})^{2\beta}
 =\prod_{i=1}^{k-1}r_i^{2\beta}.
\]
Multiplying the bound for $\varepsilon(v)^2$ by the
nonnegative weight
$\prod_{i=1}^{k-1}a(r_i)^2/\varphi(r_i)$
and summing over $\mathcal D_{k-1}$, we obtain
\[
 \begin{aligned}
 I_k
 &\le
 CD^{-2\beta}
 \sum_{\bm r\in\mathcal D_{k-1}}
       \prod_{i=1}^{k-1}
       \frac{a(r_i)^2r_i^{2\beta}}{\varphi(r_i)}
 \\
 &\quad+
 C\frac{(\log D)^2}{z^2}
 \sum_{\bm r\in\mathcal D_{k-1}}
       \prod_{i=1}^{k-1}
       \frac{a(r_i)^2}{\varphi(r_i)}.
 \end{aligned}
\]

We now enlarge the domain of each sum. Membership in
$\mathcal D_{k-1}$ imposes the conditions
\[
 r_i\mid P_*,
 \qquad
 (r_i,r_j)=1\quad(i\ne j),
 \qquad
 r_1\cdots r_{k-1}\le D.
\]
We retain only the conditions $r_i\mid P_*$ and remove
the coprimality and product restrictions.
Since every summand is nonnegative, this can only
increase the sums.

For $\gamma\in\{0,2\beta\}$, it follows that
\[
 \begin{aligned}
 &\sum_{\bm r\in\mathcal D_{k-1}}
       \prod_{i=1}^{k-1}
       \frac{a(r_i)^2r_i^\gamma}{\varphi(r_i)}
 \\
 &\quad\le
 \sum_{r_1\mid P_*}\cdots\sum_{r_{k-1}\mid P_*}
       \prod_{i=1}^{k-1}
       \frac{a(r_i)^2r_i^\gamma}{\varphi(r_i)}
 \\
 &\quad=
 \prod_{i=1}^{k-1}
 \left(
 \sum_{r_i\mid P_*}
       \frac{a(r_i)^2r_i^\gamma}{\varphi(r_i)}
 \right)
 =A_\gamma^{k-1},
 \end{aligned}
\]
by the definition of $A_\gamma$.

Taking $\gamma=2\beta$ in the first sum and $\gamma=0$
in the second, and recalling that $A_0=A$, we conclude
that
\[
 I_k
 \ll
 D^{-2\beta}A_{2\beta}^{k-1}
 +\frac{(\log D)^2}{z^2}A^{k-1}.
\]

For completeness, consider the case $k=1$ separately.
Then $\mathcal D_{k-1}=\mathcal D_0$ contains exactly one
element, namely the empty tuple. Its coordinate product
is $v=1$, and the empty product of weights also equals $1$.
Thus the sum defining $I_1$ has a single term, and
\[
 I_1=\varepsilon(1)^2.
\]
The estimate \eqref{est:eps.2} is valid at $v=1$, since $1\mid P_*$ and $1\le D$.
Substituting $v=1$ therefore gives
\[
 I_1=\varepsilon(1)^2
 \ll D^{-2\beta}+\frac{(\log D)^2}{z^2}.
\]
This is precisely the claimed bound for $I_k$ when
$k=1$, because
\[
 A_{2\beta}^{k-1}=A_{2\beta}^{0}=1,
 \qquad
 A^{k-1}=A^0=1.
\]
Hence the estimate for $I_k$ holds also in this case.

 Applying~\eqref{eq:gram} with $j=k-1$ to the coefficients
$\varepsilon(v)\prod_{i<k}a(r_i)$ yields
\[
 \E R_k^2
 \le\left(1+C_2\frac{k\log D}{z}\right)I_k
 \ll D^{-2\beta}A_{2\beta}^{k-1}
          +\frac{(\log D)^2}{z^2}A^{k-1}.
\]
Here $C_2$ is an absolute constant, and the multiplier is bounded
because $k\log D/z=o(1)$.

We now estimate the first term.
Taking $\gamma=2\beta$ in \eqref{eq:tilt}, we obtain
\[
 A_{2\beta}\le A+2C_1\beta.
\]
Since $k\ge1$ and $A\ge1$, it follows that
\[
 \begin{aligned}
 A_{2\beta}^{k-1}
 &\le (A+2C_1\beta)^{k-1}\\
 &=A^{k-1}
   \left(1+\frac{2C_1\beta}{A}\right)^{k-1}\\
 &\le A^{k-1}
   \exp\!\left(\frac{2C_1(k-1)\beta}{A}\right)\\
 &\le A^{k-1}
   \exp\!\left(2C_1(k-1)\beta\right).
 \end{aligned}
\]

By the definition $u=\beta\log D$, we have
\[
 D^{-2\beta}
 =\exp(-2\beta\log D)
 =e^{-2u}.
\]
Multiplying the preceding estimate by this factor gives
\[
 \begin{aligned}
 D^{-2\beta}A_{2\beta}^{k-1}
 &\le
 A^{k-1}\exp\!\left(-2u+2C_1(k-1)\beta\right)\\
 &\le
 A^k\exp\!\left(-2u+2C_1k\beta\right).
 \end{aligned}
\]

It remains to control the positive term in the exponent.
The constant $\eta$ was chosen so that
$\eta\le1/(8C_1)$. Since $k\le\eta x$ and
$\log D=x/4$, we have
\[
 2C_1k
 \le2C_1\eta x
 \le\frac{x}{4}
 =\log D.
\]
Multiplying by $\beta>0$ yields
\[
 2C_1k\beta
 \le\beta\log D
 =u.
\]
Therefore
\[
 -2u+2C_1k\beta\le-u,
\]
and consequently
\[
 D^{-2\beta}A_{2\beta}^{k-1}
 \le A^k e^{-u}.
\]
The second term is at most $(\log D)^2A^k/z^2$, again because
$A\ge1$. We have therefore proved
\[
 \E R_k^2
 \ll A^k\left(e^{-u}+\frac{(\log D)^2}{z^2}\right).
\]

\medskip\noindent\textit{Bounding the number of uncovered forms.}
Combining~\eqref{eq:uncovered} and~\eqref{eq:collapsed-average},
we find
\[
 \frac1T\sum_{t=1}^T
       w(t)\ind_{(L_k(t),P_*)=1}
 \le\E R_k^2+\frac{D^3}{T}
 \ll A^k\left(e^{-u}+\frac{(\log D)^2}{z^2}\right)
       +\frac{D^3}{T}.
\]
The tuple set, the coefficient bounds, and the local moment
estimates are invariant under a permutation of the forms.
Repeating the same argument with each $L_i$ as the last form
and adding the $k$ estimates gives
\[
 \begin{aligned}
 S_2
 &=\sum_{i=1}^k\frac1T\sum_{t=1}^T
                    w(t)\ind_{(L_i(t),P_*)=1}\\
 &\ll kA^k\left(e^{-u}+\frac{(\log D)^2}{z^2}\right)
           +\frac{kD^3}{T}.
 \end{aligned}
\]
By~\eqref{eq:S1}, $S_1\ge A^k/2$ for sufficiently large $x$.
Dividing by this lower bound, using $A^k\ge1$ and the parameter
choices in~\eqref{eq:parameters}, which give
$u=(\log x)^5/4$, $\log D=x/4$, $z=x^6$, and
$D^3/T\le2e^{-x/4}$, we obtain
\[
 \begin{aligned}
 \frac{S_2}{S_1}
 &\ll k e^{-u}+\frac{k(\log D)^2}{z^2}
                    +\frac{kD^3}{TA^k}\\
 &\ll k e^{-(\log x)^5/4}
          +\frac{kx^2}{x^{12}}+k e^{-x/4}\\
 &\ll x e^{-(\log x)^5/4}+x^{-9}+x e^{-x/4}
 =o(1).
 \end{aligned}
\]
Thus $S_2<S_1$ for sufficiently large $x$.

If every $1\le t\le T$ had an uncovered form, its number of
uncovered forms would be at least $1$, and the nonnegative
weights would give $S_2\ge S_1$, a contradiction. Hence some
$t\in\{1,\ldots,T\}$ satisfies $(L_i(t),P_*)>1$ for every $i$.
Each such greatest common divisor has a prime factor, and every
prime factor of $P_*$ lies in $(z,y]$. Since $T\le e^x$, this is
exactly the divisibility statement of the lemma.
\end{proof}
\section{Proof of the main result}

\begin{proof}[Proof of Theorem~\ref{thm:main}]
Let $X$ be sufficiently large and put
\[
 x=\frac13\log X,\qquad Q=\prod_{p\le x}p,\qquad
 H=\lfloor c_0\log x\rfloor,
\]
where the absolute constant $c_0>0$ will be chosen below.
Then $X=e^{3x}$, and $H\ge1$ once $X$ is sufficiently large.
We shall find an integer $N$ for which all the $H$ integers
$N+1,\ldots,N+H$ lie in $[1,X]$ and have no Romanoff
representation.

\medskip\noindent\textit{Choosing the relevant exponents.}
Define
\[
 m=\left\lfloor\frac{\log(X/2)}{\log2}\right\rfloor,
 \qquad U=2^m.
\]
The defining inequalities for the floor function give
\[
 m\le\frac{\log(X/2)}{\log2}<m+1.
\]
Exponentiating to base $2$, we obtain
$U\le X/2<2U$, or equivalently
\[
 \frac X4<U\le\frac X2.
\]
We will place our block between $U$ and $2U$. This ensures
that only the powers $2^n$ with $n\le m$ need to be excluded
by divisibility arguments.

\medskip\noindent\textit{Averaging over the small primes.}
Consider the set of distinct shifts
\[
 S_0=\{j-2^n:1\le j\le H,\ 1\le n\le m\}.
\]
There are $Hm$ pairs $(j,n)$, so $|S_0|\le Hm$; different
pairs yielding the same shift contribute only one element.
For a fixed $s\in S_0$, translation by $s$ permutes the residue
classes modulo $Q$. Exactly $\varphi(Q)$ of these classes are
coprime to $Q$. Interchanging two finite sums therefore gives
\[
 \begin{aligned}
 \frac1Q\sum_{b_0\bmod Q}
       \#\{s\in S_0:(b_0+s,Q)=1\}
 &=\frac1Q\sum_{b_0\bmod Q}
       \sum_{s\in S_0}\ind_{(b_0+s,Q)=1}\\
 &=\frac1Q\sum_{s\in S_0}
       \sum_{b_0\bmod Q}\ind_{(b_0+s,Q)=1}\\
 &=|S_0|\frac{\varphi(Q)}Q.
 \end{aligned}
\]
At least one term in a finite average is no larger than the
average itself. Hence there exists a class $b_0\bmod Q$ such
that the set
\[
 S=\{s\in S_0:(b_0+s,Q)=1\}
\]
satisfies $|S|\le|S_0|\varphi(Q)/Q\le Hm\varphi(Q)/Q$.

We now evaluate this bound. The definition of $m$ gives
\[
 m=\frac{\log(X/2)}{\log2}+O(1)
   =\frac{3x}{\log2}+O(1),
 \qquad H=c_0\log x+O(1).
\]
Since $Q$ is squarefree, Mertens' product formula yields
\[
 \frac{\varphi(Q)}Q
 =\prod_{p\le x}\left(1-\frac1p\right)
 =\frac{e^{-\gamma_E}+o(1)}{\log x}.
\]
Multiplying the three estimates, with $c_0$ fixed, gives
\[
 Hm\frac{\varphi(Q)}Q
 =(c_0\log x+O(1))
   \left(\frac{3x}{\log2}+O(1)\right)
   \frac{e^{-\gamma_E}+o(1)}{\log x}
 =\left(\frac{3e^{-\gamma_E}c_0}{\log2}+o(1)\right)x.
\]
Consequently,
\begin{equation}\label{eq:survivors}
 |S|\le Hm\frac{\varphi(Q)}Q
 \le\left(\frac{3e^{-\gamma_E}}{\log2}c_0+o(1)\right)x.
\end{equation}
Choose $c_0>0$ so small that
$3e^{-\gamma_E}c_0/\log2<\eta/2$, where $\eta$ is the absolute
constant in Lemma~\ref{lem:translate}. The $o(1)$ term has
absolute value less than $\eta/2$ for sufficiently large $X$,
so~\eqref{eq:survivors} implies $|S|\le\eta x$.

The remaining hypotheses on the shifts also hold. They are
distinct because $S$ is a set, and for $s=j-2^n\in S$,
\[
 |s|\le j+2^n\le H+U\le H+X/2\le X=e^{3x}
\]
for sufficiently large $X$, since $H=O(\log\log X)=o(X)$.

\medskip\noindent\textit{Choosing the position of the block.}
Let $r$ be the representative of $b_0$ in $\{0,\ldots,Q-1\}$.
Set
\[
 b=r+Q\left\lceil\frac{3U/2-r}{Q}\right\rceil,
\]
where $\lceil v\rceil$ is the least integer not smaller than $v$.
Then $b\equiv b_0\pmod Q$. The inequalities
$v\le\lceil v\rceil<v+1$ give
\begin{equation}\label{eq:baseline}
 \frac{3U}{2}\le b<\frac{3U}{2}+Q.
\end{equation}
Apply Lemma~\ref{lem:translate} to $S$ and this integer $b$.
It supplies an integer $t$ with $1\le t\le e^x$ such that
every $b+Qt+s$, $s\in S$, has a prime divisor in $(z,y]$.
This application is also valid when $S$ is empty, by the
$k=0$ case of the lemma. Put $N=b+Qt$.

The prime number theorem, in the form
$\sum_{p\le x}\log p=x+o(x)$, gives
\[
 \log Q=\sum_{p\le x}\log p=(1+o(1))x.
\]
Therefore
\[
 Qe^x=\exp((2+o(1))x)
     =X^{2/3+o(1)}=o(X).
\]
For the last equality, the exponent $2/3+o(1)$ is eventually
less than, for example, $5/6$. Since $Q\le Qe^x$ and
$H=o(X)$, it follows that $Q+Qe^x+H=o(X)$. Also $U>X/4$, so
\[
 \frac{Q+Qe^x+H}{U}
 <\frac{4(Q+Qe^x+H)}X=o(1).
\]
In particular, $Q+Qe^x+H<U/4$ for large $X$.
The lower bound in~\eqref{eq:baseline} and $t\ge1$ give
$N\ge3U/2+Q>3U/2$. The upper bound and $t\le e^x$ give
\[
 N+H<\frac{3U}{2}+Q+Qe^x+H
      <\frac{3U}{2}+\frac U4=\frac{7U}{4}.
\]
Together with $2U\le X$, these inequalities prove
\begin{equation}\label{eq:location}
 \frac{3U}{2}<N,
 \qquad N+H<\frac{7U}{4}<2U\le X.
\end{equation}
Thus the entire block lies in $[1,X]$.

\medskip\noindent\textit{Excluding the exponents $n\le m$.}
Fix $1\le j\le H$ and $1\le n\le m$, and put $s=j-2^n\in S_0$.
If $s\notin S$, then $(b_0+s,Q)>1$, so some prime $p\mid Q$
divides $b_0+s$. Because $N=b+Qt\equiv b_0\pmod Q$,
the same prime divides
$N+s=N+j-2^n$. Every prime factor of $Q$ is at most $x$.
If $s\in S$, the lemma instead provides a prime divisor
$p\in(z,y]$ of $N+s$. Thus in either case the number has a
prime divisor at most $y$. Here $y>x$ for large $x$, since
\[
 \frac{\log y}{\log x}
 =\frac{x}{(\log x)^6}\longrightarrow\infty.
\]

To verify that the divisor is proper, note that $2^n\le U$ and
$N>3U/2$, so
\[
 N+j-2^n>\frac{3U}{2}-U=\frac U2.
\]
Moreover,
\[
 \frac{\log y}{\log X}
 =\frac1{3(\log x)^5}\longrightarrow0,
\]
which means $y=X^{o(1)}$. In particular, $y\le X^{1/2}<X/8$
for sufficiently large $X$. Since $U>X/4$, we conclude that
\[
 N+j-2^n>\frac U2>\frac X8>y.
\]
The integer $N+j-2^n$ is therefore positive and exceeds its
prime divisor, so it is composite. It cannot equal the prime
summand in a Romanoff representation with $n\le m$.

\medskip\noindent\textit{Excluding the remaining exponents.}
If $n\ge m+1$, then
\[
 2^n\ge2^{m+1}=2U>N+H\ge N+j
\]
by~\eqref{eq:location}. Hence $N+j-2^n<0$ and cannot be a
positive prime. We have ruled out all $n\ge1$, proving
\begin{equation}\label{FINAL}
 \{N+1,\ldots,N+H\}\cap\cR=\varnothing.
\end{equation}

\medskip\noindent\textit{The length of the block.} From \eqref{FINAL} we obtain $G_{\mathcal{R}}(X)\ge H$.
Since $x=(\log X)/3$, we have
\[
 H=\lfloor c_0\log x\rfloor
 \ge\left\lfloor\frac{c_0}{2}\log\log X\right\rfloor
\]and hence $G_{\mathcal{R}}(X)\gg \log\log X$, as stated in~\eqref{eq:main}. This completes the proof of Theorem \ref{thm:main}. 
\end{proof}

\section{Other bases}

\begin{proof}[Proof of Corollary~\ref{cor:base}]
Fix an integer $a\ge2$. Retain $x=(\log X)/3$ and
$Q=\prod_{p\le x}p$, and define
\[
 m=\left\lfloor\frac{\log(X/a)}{\log a}\right\rfloor,
 \qquad U=a^m,
 \qquad H=\lfloor c_a\log x\rfloor,
\]
where $c_a>0$ is fixed sufficiently small in terms of $a$.
As before, the floor inequalities imply
\[
 a^m\le X/a<a^{m+1},
 \qquad\text{and hence}\qquad
 \frac X{a^2}<U\le\frac Xa.
\]
For sufficiently large $X$, both $m$ and $H$ are positive.

Let
\[
 S_0=\{j-a^n:1\le j\le H,\ 1\le n\le m\},
\]
with repeated shifts counted only once. Thus $|S_0|\le Hm$.
For every fixed shift, translation permutes the residue
classes modulo $Q$, so the same finite averaging identity gives
\[
 \frac1Q\sum_{b_0\bmod Q}
       \#\{s\in S_0:(b_0+s,Q)=1\}
 =|S_0|\frac{\varphi(Q)}Q.
\]
Choosing a class whose count is at most this average and putting
$S=\{s\in S_0:(b_0+s,Q)=1\}$, we obtain
$|S|\le Hm\varphi(Q)/Q$.
Here
\[
 m=\frac{3x}{\log a}+O_a(1),\qquad
 H=c_a\log x+O(1),\qquad
 \frac{\varphi(Q)}Q
   =\frac{e^{-\gamma_E}+o(1)}{\log x}.
\]
Multiplying these expressions, just as in the proof of
\eqref{eq:survivors}, yields
\[
 |S|\le Hm\frac{\varphi(Q)}Q
 \le\left(\frac{3e^{-\gamma_E}c_a}{\log a}+o_a(1)\right)x.
\]
Choose $c_a$ so that $3e^{-\gamma_E}c_a/\log a<\eta/2$.
Then $|S|\le\eta x$ for sufficiently large $X$, with a threshold
allowed to depend on $a$. Moreover, every $s=j-a^n\in S$ satisfies
\[
 |s|\le H+U\le H+X/a\le X=e^{3x}
\]
for large $X$, since $a\ge2$ and $H=o(X)$. Thus all the shift
hypotheses of Lemma~\ref{lem:translate} hold.

Let $r\in\{0,\ldots,Q-1\}$ represent $b_0$, and take
\[
 b=r+Q\left\lceil\frac{(a+1)U/2-r}{Q}\right\rceil.
\]
The ceiling inequalities show that $b\equiv b_0\pmod Q$ and
\[
 \frac{a+1}{2}U\le b<\frac{a+1}{2}U+Q.
\]
Apply the lemma to $S$ and $b$, and set $N=b+Qt$ for the
resulting integer $1\le t\le e^x$. The estimate
$Qe^x=o(X)$ proved above is unchanged. For fixed $a$, the bound
$U>X/a^2$ implies
\[
 \frac{Q+Qe^x+H}{U}
 <\frac{a^2(Q+Qe^x+H)}X=o_a(1).
\]
Since $(a-1)/2>0$, we may therefore suppose that
$Q+Qe^x+H<(a-1)U/2$. Using $t\ge1$ for the lower bound
and $t\le e^x$ for the upper bound, we find
\[
 N>\frac{a+1}{2}U,
 \qquad
 N+H<\frac{a+1}{2}U+Q+Qe^x+H<aU\le X.
\]

Fix $1\le j\le H$ and $1\le n\le m$. If $j-a^n\notin S$,
then the choice of $b_0$ supplies a prime $p\mid Q$ dividing
$N+j-a^n$. If $j-a^n\in S$, the lemma supplies a prime in
$(z,y]$ dividing the same integer. Thus in both cases there is
a prime divisor at most $y$, and
\[
 N+j-a^n>\frac{a+1}{2}U-U=\frac{a-1}{2}U.
\]
This lower bound exceeds $y$ for large $X$: indeed,
\[
 \frac{a-1}{2}U>\frac{a-1}{2a^2}X,
 \qquad \frac yX=X^{-1+o(1)}\longrightarrow0,
\]
and $(a-1)/(2a^2)$ is a fixed positive constant. The prime
divisor is therefore proper, and $N+j-a^n$ is composite.
For $n\ge m+1$, instead,
\[
 a^n\ge a^{m+1}=aU>N+H\ge N+j,
\]
so $N+j-a^n<0$ and cannot be a positive prime.
All exponents are excluded, and hence
\[
 \{N+1,\ldots,N+H\}\cap\cR_a=\varnothing.
\]
Finally, $\log x\ge\tfrac12\log\log X$ for large $X$, so
\[
 G_{\cR_a}(X)\ge H
 \ge\left\lfloor\frac{c_a}{2}\log\log X\right\rfloor
 \gg_a\log\log X.
\]
This proves the corollary.
\end{proof}

\section{Acknowledgements}
This work is an output of a research project (HSE-BR-2025-024) implemented as part of the Basic Research Program at HSE University.


\begin{thebibliography}{99}

\bibitem{Erdos}
P.~Erd\H{o}s.
\emph{On integers of the form $2^k+p$ and some related problems}.
Summa Brasiliensis Mathematicae \textbf{2} (1950), 113--123.

\bibitem{EulerGoldbach}
L.~Euler.
\emph{Letter to Christian Goldbach, 16 December 1752}.
In: F.~Lemmermeyer and M.~Mattm\"uller (eds.),
\emph{Correspondence of Leonhard Euler with Christian Goldbach}.
Publikationen des Bernoulli-Euler-Zentrums, vol.~1,
Bernoulli-Euler-Gesellschaft, Basel, 2016.
Opera Omnia, Series~IVA, vol.~4, online edition.
\url{https://doi.org/10.12685/publbez.1.2016}.

\bibitem{HabsiegerRoblot}
L.~Habsieger and X.-F.~Roblot.
\emph{On integers of the form $p+2^k$}.
Acta Arithmetica \textbf{122} (2006), no.~1, 45--50.


\bibitem{KK}
A.~B.~Kalmynin and S.~V.~Konyagin.
\emph{Large gaps between Romanov numbers} (in Russian).
Chebyshevskii Sbornik \textbf{27} (2026), no.~2, 180--186.


\bibitem{LG}
OpenAI.
\emph{Improved Long Gaps Between Primes}.
Preprint supplied as \texttt{long\_gaps.pdf}, 8~pp.
\url{https://github.com/openai/LongGapsBetweenPrimes}.

\bibitem{Maynard}
J.~Maynard.
\emph{Small gaps between primes}.
Ann. of Math. (2) \textbf{181} (2015), no.~1, 383--413.

\bibitem{Polignac}
A.~de Polignac.
\emph{Recherches nouvelles sur les nombres premiers}.
Comptes rendus de l'Acad\'emie des sciences \textbf{29} (1849),
397--401.

\bibitem{Radomskii}
A.~Radomskii.
\emph{Variants of Romanoff's theorem}.
Preprint, arXiv:2504.09954, 2025; version~7, 2026.
\url{https://arxiv.org/abs/2504.09954v7}.

\bibitem{Rieger}
G.~J.~Rieger.
\emph{Verallgemeinerung zweier S\"atze von Romanov aus der additiven
Zahlentheorie}.
Mathematische Annalen \textbf{144} (1961), 49--55.


\bibitem{Romanoff}
N.~P.~Romanoff.
\emph{\"Uber einige S\"atze der additiven Zahlentheorie}.
Mathematische Annalen \textbf{109} (1934), 668--678.



\bibitem{ShparlinskiWeingartner}
I.~E.~Shparlinski and A.~J.~Weingartner.
\emph{An explicit polynomial analogue of Romanoff's theorem}.
Finite Fields and Their Applications \textbf{44} (2017), 22--33.

\end{thebibliography}
\end{document}